\newtheorem{theorem}{Theorem}[section]
\newtheorem{lemma}[theorem]{Lemma}
\newtheorem{cor}{Corollary}[section]
\theoremstyle{definition}
\newtheorem{definition}{Definition}[section]
\newtheorem{example}{Example}[section]
\newcommand{\R}{{\mathbb R}}
\begin{document}
{\bf Three-step alternating iterations for index one matrices}

Ashish Kumar Nandi, Jajati Keshari Sahoo, Debasisha Mishra\\

Department of Mathematics, BITS Pilani, K.K. Birla Goa Campus, Goa,
India\\
ashish.nandi123@gmail.com,
jksahoo@goa.bits-pilani.ac.in\\

Department of Mathematics, NIT Raipur, Raipur, India\\
dmishra@nitrr.ac.in

{\bf Abstract}\\

Iterative methods based on matrix splittings are useful in solving
large sparse linear systems.  In this direction, proper splittings
and its several extensions are used to deal with singular and
rectangular linear systems. In this article, we introduce a new
iteration scheme called three-step alternating iterations using
proper splittings and group inverses to find an approximate solution
of singular linear systems, iteratively. A preconditioned
alternating iterative scheme is also proposed to relax some
sufficient conditions and to obtain faster convergence as well. We
then show that our scheme converges faster than the existing one.
The theoretical findings are then validated numerically.

\section{Introduction}
 A real square matrix $A$ is called a {\it $Z$-matrix} if the
off-diagonal entries of $A$ are non-positive. A $Z$-matrix $A$ can
be written as $A= sI-B$, where $s \geq  0$ and $B \geq 0$. Here $B
\geq 0$ means all the entries of $B$ are non-negative. A $Z$-matrix
$A$ is called an {\it $M$-matrix} if $s \geq \rho(B)$, where
$\rho(B)$ denotes the {\it spectral radius} of $B$ and is the
maximum of the moduli of the eigenvalues of $B$. If $s > \rho(B)$,
it follows that $A^{-1}$ exists  and $A^{-1}\geq 0$. Many
interesting characterizations of nonsingular $M$-matrices can be
found in the book by Berman and Plemmons \cite{c3}. The set of
nonsingular $M$-matrices are one of the most important subclass of
monotone matrices. A real $n \times n$ matrix $A$ is called {\it
monotone}
 if $Ax \geq  0 ~\Rightarrow x \geq 0$. The book by Collatz \cite{coltz} has discussed the natural occurrence of monotone matrices in  finite difference approximation methods for certain type of partial differential equations. This class of matrices also arises in linear complementary problems in operations research, input-output production
and growth models in economics and Markov processes in probability
and statistics, to name a few.
    Singular $M$-matrices  (when $s=\rho(B)$) very often appear
in the same context as nonsingular $M$-matrices, in particular in
the study of Markov processes (see Meyer \cite{mey}). These matrices
also arise in finite difference methods for solving certain partial
differential equations such as the Neumann problem and Poisson's
equation on a sphere (see Plemmons \cite{ple}).
    The books by Berman and Plemmons \cite{c3} and Varga \cite{var} give an excellent account of many  characterizations of the notion of monotonicity to singular and rectangular matrices. In this article, we focus on the convergence of iterative methods for solving singular linear systems  using  group (generalized) inverses. This study will  help us to find an approximate solution of a singular linear system of the form
    \begin{equation}\label{eq1}
Ax = b,
\end{equation}
where $A$ is a real $n \times n$ matrix of index 1 and $x,b$ are
real $n$-vectors. For a real square matrix $A$, the {\it index of
$A$} is defined as the smallest non-negative integer $k$, which
satisfies $rank(A^{k}) = rank(A^{k+1})$. We call a {\it singular
linear system $Ax=b$ of index 1} if index of $A$ is 1. The group
(generalized) inverse  of a matrix $A\in {\R}^{n\times n}$, denoted
by $A^{\#}$ (if it exists), is the unique matrix $X$ satisfying
$A=AXA$, $X=XAX$ and $AX=XA$. For index 1 matrices, it always
exists. A group invertible matrix $A$ is called {\it group monotone}
if $A^{\#}\geq 0.$

Wei \cite{wei} showed that for a singular linear system $Ax=b$ of
index 1, the iteration scheme:
\begin{equation}\label{eq2}
x^{i+1} = U^{\#}Vx^{i}+U^{\#}b
\end{equation}
converges to $A^{\#}b$ if and only if $\rho(U^{\#}V)<1$ (see
Corollary 3.2, \cite{wei}) by using proper splitting $A=U-V$. A
splitting $A=U-V$ of
 $A\in {\R}^{n\times n}$ is called a {\it proper
splitting} \cite{c4} if $R(U)=R(A)$ and $N(U)=N(A)$, where $R(B)$
and $N(B)$ stand for the range space and the null space of a matrix
$B$, respectively.  Thereafter, he studied the convergence of the
above iteration scheme for different sub-classes of proper
splittings (see Theorem 4.1 \& 4.2, \cite{wei}).

However, the iteration scheme (\ref{eq2}) converges very slow in
many practical cases. To overcome this, several comparison results
are proposed in the literature (see  \cite{giriindex},
\cite{jenaaot}, \cite{jena}, \cite{jmd} and \cite{yim} and the
references cited therein). In  case of a matrix having many proper
splittings, comparison results are not so useful to find the best
splitting (in the sense that the iteration matrix arising from a
matrix splitting has the smallest spectral radius). To deal with
this case, we propose a three-step alternating iteration scheme by
extending the idea of  Benzi and Szyld \cite{benz} who proposed the
concept of two-step alternating iteration method.

The rest of the paper is sectioned as follows. In the next section,
we introduce our notations, definitions and some preliminary results
which are basics for defining our problem. The notion of proper
G-regular and proper G-weak regular splitting along with some
perquisite results are proved in section 3. Section 4 contains the
main results which discuss convergence criteria for the proposed
alternating iteration scheme. It also provides an algorithm for the
three-step alternating iteration scheme with a little emphasis on
preconditioning technique. The theoretical results are then
validated through computation and are shown in section 5. The last
one is about concluding remarks.

\section{ Preliminaries}
In the subsequent sections, ${\R}^n$ means an $n$-dimensional
Euclidean space while ${\R}^{n\times n}$ denotes the set of all real
square matrices of order $n$. Assume that $S$ and $T$ are
complementary subspaces of $\mathbb{R}^{n}$. Then $P_{S,T}$ is the
projection on $S$ along $T$. So, $P_{S,T}A = A$ if and only if
$R(A)\subseteq S$ and $AP_{S,T} = A$ if and only if $N(A)\supseteq
T$. We next produce the definitions of three important generalized
inverses. The {\it Drazin inverse} of a  matrix $A\in {\R}^{n\times
n}$
 is the unique solution $X\in {\R}^{n\times n}$ satisfying  the equations:
$A^k=A^kXA$, $X=XAX$ and $AX=XA$, where $k$ is the index of $A$. It
is denoted by $A^{D}$.
 When $k=1$, then Drazin inverse is said to be the {\it group inverse} of $A$. For $A \in{\R} ^{m\times n}$, the unique
matrix $Z \in {\R}^{n \times m}$ satisfying the following four
equations known as Penrose equations:
$AZA=A,~ZAZ=Z,~(AZ)^{t}=AZ~\text{and}~ (ZA)^{t}=ZA$ is called the
{\it Moore-Penrose inverse} of $A$, where $B^t$ denotes the
transpose of $B$. It always exists, and is denoted by $A^{\dag}$.
When the matrix $A$ is nonsingular, then
$A^D=A^{\#}=A^{\dag}=A^{-1}$. The criteria `index 1' for the
existence of the group inverse is also equivalent to $N(A)=N(A^2)$
or $R(A)=R(A^2)$ or $R(A)\oplus N(A)=\mathbb{R}^{n}$. A few basic
properties which will be frequently used are: $R(A) = R(A^{\#})$;
$N(A) = N(A^{\#})$; $AA^{\#} = P_{R(A),N(A)} = A^{\#}A$. In
particular, if an element $x\in R(A)$, then $x = A^{\#}Ax$.

The computation of the group inverse of an index one matrix is shown
in Algorithm \ref{alg2}, and the same method can be found in
\cite{cdmeyer}.
 \begin{algorithm}[H]
\caption{Computation of the Group Inverse}\label{alg2}
\begin{algorithmic}[1]
\Procedure{GINV}{$A$} \If {$rank(A)=rank(A^2)$} \State $r = rank(A)$
\State $Q = [B_{R(A)}~~B_{N(A)}]$ \State $P = Q^{-1}AQ$ \State
denote $C=$ Top $r\times r$ sub matrix of $P$ \State $D=
  \left[
\begin{array}{c|c}
C^{-1} & {\bf0} \\
\hline {\bf 0} & {\bf0}
\end{array}
\right]_{n\times n}$ \State\Return $A^{\#}=QDQ^{-1}$ \Else \State
``The matrix is not of index $1$" \EndIf \EndProcedure
\end{algorithmic}
\end{algorithm}

The remaining results are collected in the next two subsections.

\subsection{Non-negative matrices}
   We call $A \in {\R}^{n\times n}$ as {\it non-negative (positive)} if $A\geq 0,~(A>0)$.
  We write $B \geq C$ if $B-C \geq 0$.   The same
notation and nomenclature are also used for vectors. The next
results deal  with the non-negativity of a matrix and the spectral
radius.

\begin{theorem}[Theorem 2.20, \cite{var}]\label{2.1}
Let $A\in{\mathbb{R}^{n\times{n}}}$ and $A\geq 0$. Then\\
$(i)$ $A$ has a non-negative real eigenvalue equal to its spectral radius.\\
$(ii)$ There exists a non-negative eigenvector for its spectral
radius.
\end{theorem}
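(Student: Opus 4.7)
The plan is to prove this via the classical Perron--Frobenius route, first establishing the result for strictly positive matrices and then obtaining the nonnegative case by a perturbation-and-compactness argument. Throughout, I will work with the Collatz--Wielandt functional
\[
r(x) = \min_{1\le i\le n,\, x_i>0} \frac{(Ax)_i}{x_i}, \qquad x\ge 0,\ x\ne 0,
\]
which measures how much $A$ stretches $x$ coordinatewise. The number I am after is $\rho(A) = \sup_{x\ge 0,\, x\ne 0} r(x)$, and the goal is to realize this supremum by an actual nonnegative eigenvector.

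First, I would treat the case $A>0$. Restricting $r$ to the compact simplex $\Sigma = \{x\ge 0 : \sum x_i = 1\}$ and replacing each $x\in \Sigma$ by $y = Ax/\|Ax\|_1 > 0$, one checks $r(y)\ge r(x)$, so the supremum of $r$ over $\Sigma$ equals its supremum over the compact set $\{Ax/\|Ax\|_1 : x\in \Sigma\} \subseteq \Sigma \cap \{y>0\}$, on which $r$ is continuous. Hence there is a maximizer $u>0$. A standard argument shows that if $Au \ne r(u)u$ then $A(Au - r(u)u) > 0$, contradicting the maximality of $r(u)$; therefore $Au = \lambda u$ with $\lambda = r(u)>0$. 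A separate estimate $\lambda \ge |\mu|$ for every eigenvalue $\mu$ (obtained by applying $|\cdot|$ coordinatewise to $Av=\mu v$ and invoking the definition of $r$) yields $\lambda = \rho(A)$, proving both (i) and (ii) when $A>0$.

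Next, for a general nonnegative $A\ge 0$, I would consider the perturbations $A_\varepsilon = A + \varepsilon J$ with $J$ the all-ones matrix and $\varepsilon>0$. Each $A_\varepsilon$ is positive, so by the previous step there exists $u_\varepsilon \in \Sigma$ with $A_\varepsilon u_\varepsilon = \rho(A_\varepsilon) u_\varepsilon$. Since $\Sigma$ is compact, I extract a subsequence $\varepsilon_k \to 0$ with $u_{\varepsilon_k}\to u \in \Sigma$ (so $u\ge 0$, $u\ne 0$). Using continuity of eigenvalues in the matrix entries, $\rho(A_{\varepsilon_k}) \to \rho(A)$, and passing to the limit in $A_{\varepsilon_k} u_{\varepsilon_k} = \rho(A_{\varepsilon_k}) u_{\varepsilon_k}$ gives $Au = \rho(A) u$ with $u\ge 0$, establishing both conclusions.

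The main obstacle is the eigenvalue-sup step in the positive case, namely showing that a maximizer $u$ of $r$ on $\Sigma$ actually satisfies $Au = r(u)u$ rather than a mere inequality $Au\ge r(u)u$ with slack; this requires the strict positivity $A>0$ to promote a nontrivial nonnegative vector into a strictly positive one and thereby contradict maximality. A secondary technical point is the continuity of $\rho$ under the perturbation $A\mapsto A+\varepsilon J$, which I will justify either by the upper semicontinuity of the spectrum together with the monotonicity $\rho(A_\varepsilon)\ge \rho(A)$, or by noting that the characteristic polynomial depends continuously on entries.
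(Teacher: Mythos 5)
This is a quoted classical result (Theorem 2.20 of Varga), which the paper states without proof, so there is no internal argument to compare against. Your proof --- the Collatz--Wielandt maximization on the simplex for $A>0$, followed by the perturbation $A+\varepsilon J$ and a compactness/continuity-of-the-spectrum limit for general $A\ge 0$ --- is correct and is essentially the standard Perron--Frobenius argument found in the cited source.
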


\begin{theorem}[Theorem 2.1.11, \cite{c3}]\label{2.4}
Let  $B\in{\mathbb{R}^{n\times{n}}}$, $B\geq 0$, $x\geq 0$ $(x\neq 0)$ and $\alpha$ is a positive scalar.\\
$(i)$ If $\alpha x\leq Bx$, then $\alpha\leq \rho(B)$.\\
$(ii)$ If $Bx-\alpha x\leq 0$, $x>0$, then $\rho(B)\leq \alpha$.
\end{theorem}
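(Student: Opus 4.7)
The plan is to handle the two parts separately, using the Gelfand spectral radius formula for (i) and a duality/pairing argument with a non-negative left Perron eigenvector for (ii).

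For (i), I would start from $\alpha x \leq Bx$ and multiply on the left by $B \geq 0$ repeatedly. Because $B$ is non-negative, multiplication by $B$ preserves coordinate-wise inequalities, so a straightforward induction yields $\alpha^k x \leq B^k x$ for every $k \geq 1$. Since $x \geq 0$ with $x \neq 0$, I can pick a coordinate $x_{i_0} > 0$ and, using the monotone $\infty$-norm, estimate $\alpha^k x_{i_0} \leq (B^k x)_{i_0} \leq \|B^k x\|_\infty \leq \|B^k\|_\infty \, \|x\|_\infty$. Taking $k$-th roots and invoking Gelfand's formula $\rho(B) = \lim_{k \to \infty} \|B^k\|^{1/k}$ gives $\alpha \leq \rho(B)$ after letting $k \to \infty$.

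For (ii), my strategy is to pair the hypothesis against a left Perron eigenvector of $B$. Since $B^T \geq 0$, Theorem \ref{2.1} applied to $B^T$ (using $\rho(B^T) = \rho(B)$) provides some $0 \neq y \geq 0$ with $B^T y = \rho(B)\, y$, i.e.\ $y^T B = \rho(B)\, y^T$. Multiplying the inequality $Bx - \alpha x \leq 0$ on the left by $y^T \geq 0$ preserves its direction and produces $\rho(B)\, y^T x \leq \alpha\, y^T x$. The strict positivity $x > 0$, together with $y \geq 0$ and $y \neq 0$, forces $y^T x > 0$, so I can divide and conclude $\rho(B) \leq \alpha$.

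The main subtlety is localized in part (ii): the hypothesis $x > 0$ (strict) is precisely what is needed to ensure $y^T x \neq 0$, since otherwise the cancellation step collapses and a counterexample is easy to produce with $y$ supported off the support of $x$. In contrast, part (i) goes through with the weaker $x \geq 0$, $x \neq 0$, because we only need one positive coordinate to drive the Gelfand estimate. Beyond that, no real obstacle is expected, as both steps reduce to standard non-negative matrix techniques already assumed via Theorem \ref{2.1}.
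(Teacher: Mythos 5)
The paper does not prove this statement at all: it is quoted verbatim as Theorem 2.1.11 of the Berman--Plemmons book \cite{c3} and used as a black box, so there is no internal proof to compare yours against. Your blind proof is nevertheless correct and self-contained. In part (i) the induction $\alpha^k x \leq B^k x$ is valid because left-multiplication by $B\geq 0$ and scaling by $\alpha^k\geq 0$ both preserve componentwise inequalities, and the Gelfand limit $\|B^k\|_\infty^{1/k}\to\rho(B)$ together with $x_{i_0}^{1/k}\to 1$ and $\|x\|_\infty^{1/k}\to 1$ delivers $\alpha\leq\rho(B)$; only one strictly positive coordinate of $x$ is needed, matching the weaker hypothesis there. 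In part (ii) the pairing with a non-negative left Perron vector $y$ of $B$ (obtained from Theorem \ref{2.1} applied to $B^t$, with $\rho(B^t)=\rho(B)$) is the standard duality argument, and you correctly identify that the strict positivity $x>0$ is exactly what guarantees $y^t x>0$ so the cancellation is legitimate. The classical proof in \cite{c3} runs along essentially these Perron--Frobenius lines, so your argument is a faithful and correct reconstruction; no gaps.
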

The last result is a special case of Theorem 3.16, \cite{var}.

\begin{theorem}\label{2.3}
Let $X\in{\mathbb{R}^{n\times{n}}}$ and $X\geq 0$. Then $\rho(X)<1$
if and only if $(I-X)^{-1}$ exists and  $(I-X)^{-1} =
\displaystyle{\sum\limits_{k=0}^{\infty}X^{k}\geq 0}$.
\end{theorem}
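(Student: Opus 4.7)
The plan is to prove the two directions of this Neumann-type equivalence separately, leveraging Theorem \ref{2.1} (existence of a nonnegative eigenvector attaining $\rho(X)$) for the nontrivial reverse implication.

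For the forward direction, assume $\rho(X)<1$. First I would argue that $1$ cannot be an eigenvalue of $X$ since $\rho(X)<1$, so $I-X$ is invertible. Next, I would introduce the partial sums $S_N=\sum_{k=0}^{N}X^{k}$ and use the telescoping identity
\[
(I-X)S_N = I - X^{N+1}.
\]
The standard fact that $\rho(X)<1$ implies $X^{N+1}\to 0$ (which follows from the Jordan form of $X$, or can be taken as a known companion of Theorem \ref{2.1}) lets me pass to the limit and conclude $(I-X)^{-1}=\lim_N S_N=\sum_{k=0}^{\infty}X^{k}$. Nonnegativity of the limit is immediate from $X\geq 0$, which gives $X^{k}\geq 0$ for every $k$, hence every partial sum, hence the limit, is nonnegative.

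For the reverse direction, assume $(I-X)^{-1}$ exists and equals $\sum_{k=0}^{\infty}X^{k}\geq 0$. By Theorem \ref{2.1}, since $X\geq 0$, there exists a nonzero vector $x\geq 0$ with $Xx=\rho(X)\,x$, and consequently $X^{k}x=\rho(X)^{k}x$ for every $k\geq 0$. Applying the convergent series to $x$ gives
\[
(I-X)^{-1}x \;=\; \sum_{k=0}^{\infty}X^{k}x \;=\; \Bigl(\sum_{k=0}^{\infty}\rho(X)^{k}\Bigr)x.
\]
Since the left-hand side is a finite vector and $x\neq 0$, the scalar series $\sum_{k=0}^{\infty}\rho(X)^{k}$ must converge, which forces $\rho(X)<1$.

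The main obstacle I anticipate is purely presentational rather than mathematical: the forward direction relies on the fact that $\rho(X)<1$ implies $X^{N}\to 0$, which is a standard companion to Theorem \ref{2.3} but is not explicitly stated as a lemma in the excerpt. I would either cite this as a standard spectral-radius fact from Varga's book (in line with the citation style already used in the paper) or sketch the one-line Jordan-form argument. Everything else reduces to the telescoping identity and the Perron-type eigenvector supplied by Theorem \ref{2.1}, so no delicate estimates are needed.
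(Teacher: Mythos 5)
Your argument is correct: the forward direction via the telescoping identity $(I-X)S_N=I-X^{N+1}$ together with $X^{N}\to 0$, and the reverse direction via the Perron eigenvector from Theorem \ref{2.1} applied to the convergent series, is the standard and complete proof of this Neumann-series equivalence. Note that the paper itself supplies no proof here --- it states the result as a special case of Theorem 3.16 of Varga's book --- so there is nothing to diverge from; your write-up is essentially the textbook argument being cited, and the only loose end you correctly flag (that $\rho(X)<1$ implies $X^{N}\to 0$) is a standard spectral fact that is fine to quote.
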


\subsection{Proper Splittings}
The notion of proper splitting introduced by Berman and Plemmons
\cite{c4}  plays a key role in the study of the convergence of
iterative methods to find an approximate solution of real large
singular and rectangular linear systems. It is extended to index
splitting by Wei \cite{wei} and index-proper splitting by Chen and
Chen \cite{chen} to find the approximate iterative solution of
$A^Db$ which is helpful in the study of singular differential and
difference equations (see Chapter 9, \cite{camp}).  A method of
construction of proper splitting can be found in \cite{misoam} while
its uniqueness is shown very recently in \cite{nmdm}. The result
produced below is a combination of  Theorem 5.2, \cite{c5} and
Theorem 4.1, \cite{misoam}, and is also a special case of Theorem
3.2 $\&$ 3.3, \cite{jena} and Theorem 3.1, \cite{wei} when index $1$
matrices are considered.

\begin{theorem}\label{2.5}
Let $A = U-V$ be a proper splitting of $A\in{\mathbb{R}^{n\times{n}}}$. Suppose that $A^{\#}$ exists. Then\\
$(a)$ $U^{\#}$ exists. \\
$(b)$ $AA^{\#} = UU^{\#};A^{\#}A = U^{\#}U$.\\
$(c)$ $A = U(I-U^{\#}V) =(I-VU^{\#})U$.\\
$(d)$ $I-U^{\#}V$ and $I-VU^{\#}$ are nonsingular.\\
$(e)$ $A^{\#} = {(I-U^{\#}V)}^{-1}U^{\#}= {U^{\#}(I-VU^{\#})}^{-1}$.
\end{theorem}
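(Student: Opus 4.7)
The plan is to handle the five statements in order, leveraging at each step the projection descriptions of $UU^{\#}$ and $U^{\#}U$ recalled in the preliminaries together with the index-$1$ hypothesis $R(A)\oplus N(A)=\mathbb{R}^n$.

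For (a), I recall that a square matrix $M$ admits a group inverse if and only if $R(M)\oplus N(M)=\mathbb{R}^n$. Since the splitting is proper, $R(U)=R(A)$ and $N(U)=N(A)$, so $R(U)\oplus N(U)=\mathbb{R}^n$ is inherited from the corresponding property of $A$. Part (b) then follows immediately from the projection description: $AA^{\#}=P_{R(A),N(A)}$ and $UU^{\#}=P_{R(U),N(U)}$ are literally the same projection under the proper splitting hypothesis, and analogously $A^{\#}A=U^{\#}U$.

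For (c), the point is to verify that $UU^{\#}V=V$ and $VU^{\#}U=V$. I would write $V=U-A$ and use $UU^{\#}U=U$ together with the fact that the projector $UU^{\#}=P_{R(A),N(A)}$ fixes $A$ on the left (because $R(A)\subseteq R(A)$) and on the right (because $N(A)\supseteq N(A)$). This gives $UU^{\#}V=U-A=V$ and symmetrically $VU^{\#}U=V$; substituting produces the two factorizations. For (d) I would argue via kernels: if $(I-U^{\#}V)x=0$, then $x=U^{\#}Vx\in R(U^{\#})=R(U)=R(A)$, and multiplying by $U$ gives $Ux=UU^{\#}Vx=Vx$, hence $Ax=0$, so $x\in N(A)$. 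The index-$1$ hypothesis $R(A)\cap N(A)=\{0\}$ forces $x=0$. A parallel argument (or a reduction via $U^{\#}$) handles $I-VU^{\#}$.

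For (e), I would verify that $X:=(I-U^{\#}V)^{-1}U^{\#}$ satisfies the three defining equations of the group inverse: $AXA=A$, $XAX=X$ and $AX=XA$. The easy computation is $AX=U(I-U^{\#}V)(I-U^{\#}V)^{-1}U^{\#}=UU^{\#}=AA^{\#}$, from which $AXA=A$ and $XAX=X$ follow by using $U^{\#}UU^{\#}=U^{\#}$. The step I expect to be the main obstacle is $XA=AX$; here I would compute $XA$ using $A=(I-VU^{\#})U$ and finish by observing that $Vx=0$ for every $x\in N(U)=N(A)$ (since $V=U-A$ annihilates $N(A)$), which forces $U^{\#}V=U^{\#}V\,UU^{\#}$, and the remaining algebraic simplification collapses $XA$ to $UU^{\#}=AX$. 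The second formula in (e) then follows either by the same argument or, more efficiently, from the elementary identity $(I-U^{\#}V)U^{\#}=U^{\#}(I-VU^{\#})$, which, once both factors are known to be invertible by (d), yields $(I-U^{\#}V)^{-1}U^{\#}=U^{\#}(I-VU^{\#})^{-1}$.
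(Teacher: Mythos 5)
The paper never proves Theorem \ref{2.5}; it is stated as a quotation, assembled from Theorem 5.2 of the Mishra reference, Theorem 4.1 of Mishra--Sivakumar, and special cases of results of Jena--Mishra and Wei for index $1$ matrices. So there is no in-text argument to compare yours against, but your self-contained derivation is correct and uses exactly the machinery the preliminaries set up. The hinge of the whole proof is the observation that, under a proper splitting, $UU^{\#}=P_{R(U),N(U)}=P_{R(A),N(A)}=AA^{\#}$, which gives (a) and (b) at once; your verification of $UU^{\#}V=V$ and $VU^{\#}U=V$ via $V=U-A$ and the projector identities is the standard route to (c); and your kernel argument for (d) correctly exploits $R(A)\cap N(A)=\{0\}$, with the reduction $y=VU^{\#}y\Rightarrow (I-U^{\#}V)U^{\#}y=0$ disposing of the second factor. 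For (e), the only genuinely delicate point is $XA=AX$, and you identify the right fact to close it: $N(V)\supseteq N(A)$ forces $U^{\#}V\,UU^{\#}=U^{\#}V$, whence $U^{\#}U-U^{\#}V=(I-U^{\#}V)U^{\#}U$ and $XA=U^{\#}U=UU^{\#}=AX$; the second expression for $A^{\#}$ then follows from the intertwining identity $(I-U^{\#}V)U^{\#}=U^{\#}(I-VU^{\#})$ exactly as you say. In short, what your write-up buys over the paper is an actual proof from first principles of a result the paper only cites, and every step you sketch can be filled in without further ideas.
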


\section{Proper G-regular \& Proper G-weak regular Splitting}
In this section, we recall first the definition of proper G-regular
splittings and proper G-weak regular splittings, and then present
some new results for index 1 matrices. Definition 2.1 and 2.2,
\cite{jenaaot} reduce to the following two definitions, respectively
when we use the group inverse in the place of the Drazin inverse.

\begin{definition}
Let $A = U-V$ be a proper splitting of $A\in\mathbb{R}^{n\times n}.$
Then the splitting is called \textit{proper G-regular} splitting if
$U^{\#}$ exists, $U^{\#}\geq 0$ and $V\geq 0$.
\end{definition}

\begin{definition}
Let $A = U-V$ be a proper splitting of $A\in\mathbb{R}^{n\times n}.$
Then the splitting is called a \textit{proper G-weak regular}
splitting if $U^{\#}$ exists, $U^{\#}\geq 0$ and $U^{\#}V\geq 0$.
\end{definition}

 Below is an algorithm which we have used for computing proper G-weak regular splittings in this article.

\begin{algorithm}[H]
\caption{Generation of Proper G-weak regular splittings}\label{alg3}
\begin{algorithmic}[1]
\Procedure{PROP G-WEAK REG}{$A$} \State \small{Generate
$B=\{K:R(A)=R(K)~ \&~ N(A)=N(K)\}$} \While {(true)} \State $U =
\mbox{Random}(B)$ \If {($U^{\#}\geq 0~ \&~U^{\#}(U-A)\geq 0)$}
\State\Return $U$ \EndIf \EndWhile \EndProcedure
\end{algorithmic}
\end{algorithm}
The next example shows that a proper G-weak regular splitting does
not imply a proper G-regular splitting.
\begin{example}
Let $A =
  \begin{bmatrix}
    1&-1&3\\
   -1&10&-3\\
    3&-3&9
  \end{bmatrix}
  =
  \begin{bmatrix}
    2&-1&6\\
   -2&10&-6\\
    6&-3&18
  \end{bmatrix}
  -
  \begin{bmatrix}
    1&0&3\\
    {\bf -1 }&0& {\bf -3}\\
    3&0&9
  \end{bmatrix}
= U-V$. Then $R(U) = R(A)$, $N(U) = N(A)$, $U^{\#} =
\begin{bmatrix}
    0.0056&0.0056&0.0167\\
    0.0112&0.1111&0.0335\\
    0.0167&0.0167&0.05
  \end{bmatrix}
\geq 0$ and $U^{\#}V =
\begin{bmatrix}
    0.5&0&0.15\\
    0.0004&0&0.0012\\
    0.15&0&0.45
  \end{bmatrix}
\geq 0.$ Hence, the splitting $A = U-V$ is a proper G-weak regular
splitting but not a proper G-regular splitting since $V \not\geq
{0}$.
\end{example}

From the above example, it is clear that the class of proper G-weak
regular splittings contains the class of proper G-regular
splittings. We next recall convergence results for both of these
class of matrices which also characterize the notion of group
monotonicity. The first one concerns a proper G-regular splitting of
a matrix, and a particular case of Theorem 3.2 and Theorem 3.4 of
\cite{abj}.

\begin{theorem}\label{3.1}
Let $A = U-V$ be a proper G-regular splitting of $A\in
{\mathbb{R}^{n\times{n}}}$. Then  $A^{\#}\geq 0$ if and only if
$\rho(U^{\#}V)<1$.
\end{theorem}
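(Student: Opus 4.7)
The plan is to pivot both implications on the identity $A^{\#}=(I-U^{\#}V)^{-1}U^{\#}$ from Theorem~\ref{2.5}(e), exploiting that the proper G-regular hypothesis gives $U^{\#}\geq 0$, $V\geq 0$, and hence $U^{\#}V\geq 0$. The sufficiency is the short half: if $\rho(U^{\#}V)<1$, then Theorem~\ref{2.3} applied to $U^{\#}V$ yields $(I-U^{\#}V)^{-1}=\sum_{k=0}^{\infty}(U^{\#}V)^{k}\geq 0$, and right-multiplication by $U^{\#}\geq 0$ produces $A^{\#}\geq 0$.

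For the necessity, I would assume $A^{\#}\geq 0$ and set $\lambda=\rho(U^{\#}V)$. Theorem~\ref{2.1} applied to $U^{\#}V$ delivers a non-negative eigenvector $x\neq 0$ with $U^{\#}Vx=\lambda x$. The case $\lambda=1$ is ruled out immediately, since then $(I-U^{\#}V)x=0$ with $x\neq 0$ would contradict the nonsingularity in Theorem~\ref{2.5}(d). For $\lambda>1$, I would rewrite Theorem~\ref{2.5}(e) as $A^{\#}(I-U^{\#}V)=U^{\#}$ and evaluate at $x$ to obtain the scalar relation $(1-\lambda)A^{\#}x=U^{\#}x$. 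Because $A^{\#}x\geq 0$, $U^{\#}x\geq 0$, and $1-\lambda<0$, the left side is non-positive while the right side is non-negative, forcing $A^{\#}x=0$ and $U^{\#}x=0$. The index-one identities $N(A^{\#})=N(A)$ and $N(U^{\#})=N(U)$ recalled in the preliminaries then upgrade these to $Ax=0$ and $Ux=0$, so $Vx=Ux-Ax=0$ and $U^{\#}Vx=0$; but $U^{\#}Vx=\lambda x\neq 0$, the desired contradiction. Hence $\lambda<1$.

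The main obstacle is the necessity direction, specifically the $\lambda>1$ branch: a single scalar identity $(1-\lambda)A^{\#}x=U^{\#}x$ must be made to deliver two independent kernel memberships, $x\in N(A^{\#})$ and $x\in N(U^{\#})$, which are then converted to $x\in N(A)\cap N(U)$ via the index-one structure in order to reach $Vx=0$. Every piece of the proper G-regular hypothesis is active here: $U^{\#}\geq 0$ and $V\geq 0$ feed the Perron-Frobenius eigenvector and the sign argument, while the nonsingularity from Theorem~\ref{2.5}(d) together with the group-inverse null-space identities close the contradiction; weakening any of them would break either the sign analysis or the step $Vx=Ux-Ax$.
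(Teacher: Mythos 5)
The paper offers no proof of Theorem~\ref{3.1} at all --- it is quoted as a particular case of Theorems 3.2 and 3.4 of \cite{abj} --- so your attempt can only be judged on its own terms. The sufficiency half is fine: $U^{\#}V\geq 0$, Theorem~\ref{2.3}, and $A^{\#}=(I-U^{\#}V)^{-1}U^{\#}$ give $A^{\#}\geq 0$ immediately. The necessity half, however, rests on a false identity. Theorem~\ref{2.5}(e) rearranges to $(I-U^{\#}V)A^{\#}=U^{\#}$ and to $A^{\#}(I-VU^{\#})=U^{\#}$; it does \emph{not} give $A^{\#}(I-U^{\#}V)=U^{\#}$, because that would require $U^{\#}$ to commute with $(I-U^{\#}V)^{-1}$, i.e.\ $U^{\#}V$ to commute with $U^{\#}$, which fails in general. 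A concrete check: $U=\left[\begin{smallmatrix}1&0\\-1&1\end{smallmatrix}\right]$, $V=\left[\begin{smallmatrix}0&1/2\\0&0\end{smallmatrix}\right]$ is a regular splitting of a monotone $A$, and $A^{-1}(I-U^{-1}V)=\left[\begin{smallmatrix}2&-1/2\\2&0\end{smallmatrix}\right]\neq U^{-1}$. With the correct identities, plugging in your right Perron eigenvector $x$ of $U^{\#}V$ yields only $(I-U^{\#}V)A^{\#}x=U^{\#}x$, which does not collapse to the scalar relation $(1-\lambda)A^{\#}x=U^{\#}x$, since $A^{\#}x$ need not be an eigenvector of $U^{\#}V$. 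So the $\lambda>1$ branch, as written, does not go through.

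The repair is small and keeps your strategy intact: either take a non-negative \emph{left} eigenvector, $x^{t}U^{\#}V=\lambda x^{t}$ (apply Theorem~\ref{2.1} to $(U^{\#}V)^{t}\geq 0$), and pre-multiply $(I-U^{\#}V)A^{\#}=U^{\#}$ by $x^{t}$ to get $(1-\lambda)x^{t}A^{\#}=x^{t}U^{\#}$; or take a non-negative right eigenvector $y$ of $VU^{\#}\geq 0$ (same spectral radius) and apply $A^{\#}(I-VU^{\#})=U^{\#}$ to $y$ to get $(1-\lambda)A^{\#}y=U^{\#}y$. Your sign argument then forces $U^{\#}y=0$, and in the $VU^{\#}$ version the contradiction is immediate from $\lambda y=VU^{\#}y=V(U^{\#}y)=0$, without even invoking the null-space identities. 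This left-eigenvector/transpose manoeuvre is exactly the device the paper itself uses in the proof of Theorem~\ref{3.6}, so the corrected argument would sit naturally alongside the rest of the text. The $\lambda=1$ case via Theorem~\ref{2.5}(d) is correct as you state it.
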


The next one is about the convergence of proper G-weak regular
splittings. It follows from Theorem 3.8, \cite{abj} and Theorem 4.2,
\cite{wei}.

\begin{theorem}\label{3.2}
Let $A = U-V$ be a proper G-weak regular splitting of
$A\in{\mathbb{R}^{n\times{n}}}$. Then $A^{\#}\geq 0$ if and only if
$\rho(U^{\#}V)<1$.
\end{theorem}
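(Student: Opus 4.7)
The plan is to express everything in terms of $T := U^{\#}V$ via the identity $A^{\#} = (I-T)^{-1}U^{\#}$ from Theorem~\ref{2.5}(e), handle the forward direction by a short Neumann-series argument using Theorem~\ref{2.3}, and attack the converse with a monotone-convergence argument on partial sums of $T^{j}U^{\#}$ rather than a Perron--Frobenius comparison.

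If $\rho(T)<1$, then since $T\geq 0$, Theorem~\ref{2.3} gives $(I-T)^{-1}=\sum_{k=0}^{\infty}T^{k}\geq 0$, and Theorem~\ref{2.5}(e) exhibits $A^{\#}$ as the product of the two nonnegative matrices $(I-T)^{-1}$ and $U^{\#}$, proving $A^{\#}\geq 0$.

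For the converse, assume $A^{\#}\geq 0$. Rewriting Theorem~\ref{2.5}(e) as $(I-T)A^{\#}=U^{\#}$ gives the recursion $A^{\#}=U^{\#}+TA^{\#}$, which I unfold into
\begin{equation*}
A^{\#} \;=\; \sum_{j=0}^{k-1} T^{j}U^{\#} \;+\; T^{k}A^{\#}\qquad (k\geq 1).
\end{equation*}
The hypotheses $T\geq 0$, $U^{\#}\geq 0$, $A^{\#}\geq 0$ make every summand entrywise nonnegative, so the partial sums $\sum_{j=0}^{k-1}T^{j}U^{\#}$ are entrywise nondecreasing and bounded above by $A^{\#}$, hence converge; consequently the general term $T^{k}U^{\#}$ tends to $0$ entrywise.

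The main obstacle is to promote $T^{k}U^{\#}\to 0$ to $T^{k}\to 0$, since the latter is what forces $\rho(T)<1$ (by the spectral radius formula). The proper splitting provides $R(U^{\#})=R(U)=R(A)$, so $T^{k}U^{\#}\to 0$ is equivalent to $T^{k}|_{R(A)}\to 0$. On the complementary subspace, for any $y\in N(A)$ the relation $Ay=0$ forces $Vy=Uy$, and then $Ty=U^{\#}Vy=U^{\#}Uy=P_{R(A),N(A)}\,y=0$, so $T$ annihilates $N(A)$ and in particular $T^{k}|_{N(A)}=0$ for all $k\geq 1$. The index-one decomposition $\mathbb{R}^{n}=R(A)\oplus N(A)$ then glues the two pieces together to give $T^{k}\to 0$, whence $\rho(T)<1$, completing the proof.
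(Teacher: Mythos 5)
Your proof is correct, but note that the paper does not actually prove Theorem \ref{3.2}: it is quoted as a consequence of Theorem 3.8 of \cite{abj} and Theorem 4.2 of \cite{wei}, so your self-contained argument is genuinely additional content rather than a reproduction of an in-paper proof. The forward direction (Neumann series applied to $A^{\#}=(I-T)^{-1}U^{\#}$ via Theorems \ref{2.3} and \ref{2.5}(e)) is the standard one. For the converse, your telescoped identity $A^{\#}=\sum_{j=0}^{k-1}T^{j}U^{\#}+T^{k}A^{\#}$ with monotone, entrywise-bounded partial sums is exactly the device the authors themselves deploy later, in the proof of Theorem \ref{conv}, for the three-step iteration matrix $H$; but there they pass from boundedness of $(I+H+\cdots+H^{m})(I-H)A^{\#}$ to boundedness of the partial sums of $\sum H^{m}$ without comment. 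Your extra step --- using $R(U^{\#})=R(U)=R(A)$ to turn $T^{k}U^{\#}\to 0$ into $T^{k}|_{R(A)}\to 0$, checking that $T$ annihilates $N(A)$ (indeed $y\in N(A)=N(U)$ gives $Uy=0$ and $Vy=Uy-Ay=0$, so $Ty=0$ even more directly than via the projector $U^{\#}U$), and gluing along the index-one decomposition $\mathbb{R}^{n}=R(A)\oplus N(A)$ to get $T^{k}\to 0$ and hence $\rho(T)<1$ --- is precisely the point that must be supplied in the singular setting, so your argument is complete and in this respect more careful than the paper's own analogous reasoning. The only cosmetic caveat is that the existence of $A^{\#}$ (index one of $A$) is used tacitly through Theorem \ref{2.5}(d)--(e); this matches the paper's standing assumption but deserves a word.
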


The rate of convergence of the scheme (\ref{eq2}) depends upon
$\rho(U^{\#}V)$. Therefore,  the smaller spectral radius of the
iteration matrix yields the faster convergence rate of the iterative
scheme (\ref{eq2})  to solve the system (\ref{eq1}). The next result
helps us to choose an iteration scheme having the faster convergence
rate if $A$ has two different subclasses of proper splitting which
leads to two different iteration schemes.

\begin{theorem}\label{3.6}
Let $A = B-C$ be a proper G-weak regular splitting and $A = U-V$ be
a proper G-regular splitting of a group monotone matrix
$A\in\mathbb{R}^{n\times{n}}.$ If $B^{\#}\geq U^{\#}$, then
$\rho(B^{\#}C)\leq \rho(U^{\#}V)<1$.
\end{theorem}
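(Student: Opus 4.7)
The plan is to convert the hypothesis $B^{\#}\ge U^{\#}$ into an inequality between the matrices $B^{\#}CA^{\#}$ and $A^{\#}VU^{\#}$, then pass to spectral radii by pairing with a Perron eigenvector produced by Theorem \ref{2.1}.

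First, I would invoke Theorem \ref{3.2} applied to the proper G-weak regular splitting $A=B-C$ and Theorem \ref{3.1} applied to the proper G-regular splitting $A=U-V$; since $A$ is group monotone, both yield $\rho(B^{\#}C)<1$ and $\rho(U^{\#}V)<1$. This already secures the second inequality in the conclusion.

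Second, I would use Theorem \ref{2.5}(e) to obtain the two representations
\[
A^{\#}=(I-B^{\#}C)^{-1}B^{\#}=U^{\#}(I-VU^{\#})^{-1},
\]
which rearrange to $B^{\#}=A^{\#}-B^{\#}CA^{\#}$ and $U^{\#}=A^{\#}-A^{\#}VU^{\#}$. Subtracting and using $B^{\#}\ge U^{\#}$ gives the key inequality
\[
B^{\#}CA^{\#}\le A^{\#}VU^{\#}.
\]

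Third, since $VU^{\#}\ge 0$ (because $U^{\#}\ge 0$ and $V\ge 0$ from the proper G-regular splitting) and $\rho(VU^{\#})=\rho(U^{\#}V)$, Theorem \ref{2.1} furnishes a non-negative eigenvector $y\ge 0$, $y\ne 0$, with $VU^{\#}y=\rho(U^{\#}V)y$. If $\rho(U^{\#}V)=0$, the conclusion reduces to showing $\rho(B^{\#}C)=0$, which I would handle directly from $B^{\#}CA^{\#}\le 0$ combined with $B^{\#}C\ge 0$ and the fact that $R(B^{\#}C)\subseteq R(A)$. Assuming $\rho(U^{\#}V)>0$, the relation $y=\frac{1}{\rho(U^{\#}V)}VU^{\#}y$ places $y$ in $R(V)\subseteq R(U)=R(A)$, so $z:=A^{\#}y$ lies in $R(A^{\#})=R(A)$ and is nonzero because $A^{\#}$ restricted to $R(A)$ is the inverse of $A|_{R(A)}$.

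Right-multiplying the key inequality by $y$ yields $B^{\#}Cz\le \rho(U^{\#}V)\,z$ with $z\ge 0$. The main obstacle is that Theorem \ref{2.4}(ii) requires strict positivity $z>0$, whereas we only automatically have $z\ge 0$, $z\ne 0$. I would overcome this either by pairing with a left Perron eigenvector $y_{1}^{T}\ge 0$ of $B^{\#}C$ (so that $\rho(B^{\#}C)\,y_{1}^{T}A^{\#}y\le \rho(U^{\#}V)\,y_{1}^{T}A^{\#}y$, and then argue $y_{1}^{T}A^{\#}y>0$ from $y_{1},y\in R(A)$ and $A^{\#}\ge 0$ being invertible on $R(A)$), or by a continuity/perturbation argument adding $\epsilon E$ with $E>0$ to both iteration matrices so that the Perron eigenvectors become strictly positive, invoking Theorem \ref{2.4}(ii) in the perturbed problem, and letting $\epsilon\to 0^{+}$ to recover $\rho(B^{\#}C)\le \rho(U^{\#}V)$.
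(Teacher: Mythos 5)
Your opening is the same as the paper's: both spectral radii are less than $1$ by Theorems \ref{3.1} and \ref{3.2}, and Theorem \ref{2.5}(e) turns $B^{\#}\geq U^{\#}$ into the key inequality $B^{\#}CA^{\#}\leq A^{\#}VU^{\#}$. The divergence --- and the gap --- is in how you pass from this matrix inequality to the spectral radii. You pair with the \emph{right} Perron vector $y$ of $VU^{\#}$, obtain $B^{\#}Cz\leq\rho(U^{\#}V)z$ with $z=A^{\#}y\geq 0$, $z\neq 0$, and then need Theorem \ref{2.4}(ii), which demands $z>0$. You correctly flag this obstacle, but neither of your escape routes is sound. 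For the first: with $y_{1}^{T}$ a left Perron vector of $B^{\#}C$ you get $\rho(B^{\#}C)\,y_{1}^{T}A^{\#}y\leq\rho(U^{\#}V)\,y_{1}^{T}A^{\#}y$, but $y_{1}^{T}A^{\#}y>0$ does \emph{not} follow from nonnegativity of $A^{\#}$ plus its invertibility on $R(A)$ (take $A^{\#}=I$, $y_{1}=e_{1}$, $y=e_{2}$: the form vanishes); also $y_{1}\in R(A^{t})$, not $R(A)$. The case $y_{1}^{T}A^{\#}y=0$ is a genuine possibility and, without extra hypotheses such as positivity of the row and column sums of $A^{\#}$, cannot be excluded. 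For the second: adding $\epsilon E$ with $E>0$ to \emph{both} iteration matrices destroys the key inequality unless $EA^{\#}\leq A^{\#}E$; one must instead perturb asymmetrically (by $\epsilon A^{\#}M$ on one side and $\epsilon MA^{\#}$ on the other), and then strict positivity of the perturbed Perron vectors again requires irreducibility, which brings back the same extra hypothesis on $A^{\#}$.

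The paper sidesteps all of this by pairing from the other side. Take the left Perron vector $x\geq 0$ of $B^{\#}C$, i.e.\ $x^{t}B^{\#}C=\rho(B^{\#}C)x^{t}$, pre-multiply $B^{\#}CA^{\#}\leq A^{\#}VU^{\#}$ by $x^{t}$, set $z^{t}=x^{t}A^{\#}$ and transpose to get $\rho(B^{\#}C)\,z\leq (VU^{\#})^{t}z$. Now only Theorem \ref{2.4}(i) is needed, which requires merely $z\geq 0$ and $z\neq 0$; and $z\neq 0$ follows because $x\in R(C^{t})\subseteq R(A^{t})$, so $z=(A^{t})^{\#}x=0$ would force $x=A^{t}(A^{t})^{\#}x=0$. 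This asymmetry --- using part (i) of Theorem \ref{2.4} rather than part (ii) --- is exactly the idea your argument is missing; with it the strict-positivity issue never arises.
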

\begin{proof}
 By Theorem \ref{3.1} and Theorem \ref{3.2}, we have $\rho(U^{\#}V)<1$  and  $\rho(B^{\#}C)<1$, respectively. Since $B^{\#}C\geq 0$, there exists an eigenvector $x\geq 0$ such that $x^{t}B^{\#}C = \rho(B^{\#}C)x^{t}$, by Theorem \ref{2.1}. Hence $x\in R(C^{t})\subseteq R(B^{t}) = R(A^{t})$. Now, the condition  $B^{\#}\geq U^{\#}$  yields $(I-B^{\#}C)A^{\#}\geq A^{\#}(I-VU^{\#})$ by using Theorem \ref{2.5} (e). This implies $B^{\#}CA^{\#} \leq A^{\#}VU^{\#}$. Pre-multiplying $x^{t}$ to $B^{\#}CA^{\#} \leq A^{\#}VU^{\#}$, we obtain $x^{t}B^{\#}CA^{\#} \leq x^{t}A^{\#}VU^{\#},$ i.e.,
 $\rho(B^{\#}C)x^{t}A^{\#}\leq x^{t}A^{\#}VU^{\#}.$
 Setting $x^{t}A^{\#}=z^t$ and taking transpose both sides, we get $\rho(B^{\#}C)z\leq (VU^{\#})^tz.$
 Therefore, by Theorem \ref{2.4} (i), we have  $\rho(B^{\#}C)\leq \rho(VU^{\#})^t= \rho(VU^{\#})=\rho(U^{\#}V)<1$ as $z\geq 0$ and $z\neq 0$ which is shown below by the method of contradiction. Suppose that $z=0$. Then
 $(A^{t})^{\#}x=(A^{\#})^{t}x=(x^{t}A^{\#})^t=z^t=0.$
 So $x=\displaystyle{P_{R(A^t), N(A^t)}}x=A^{t}(A^{t})^{\#}x = 0$ as $x\in R(A^{t})$ which is a contradiction.
 \end{proof}

We remark that the problems mentioned to be open in the concluding
section of \cite{misalt} can be easily now solved by using the
Moore-Penrose inverse version of the above result which is Theorem
2.8, \cite{nmdm}. The proof of the above result follows analogous
technique as in the proof of Theorem 2.8, \cite{nmdm}. However,
these ideas are completely different from \cite{els}, where the
author proved the above result in the nonsingular matrix setting.
The present proof is much simple than Elsener\cite{els}'s  one. One
may refer part (c) of a Lemma proved in section 3 of \cite{els} for
the same, and the same is produced below.

\begin{cor} \cite{els}
Let $A = B-C$ be a weak regular splitting and $A = U-V$ be a regular
splitting of  $A\in\mathbb{R}^{n\times{n}}$. If $B^{-1}\geq U^{-1}$
and $A^{-1}\geq 0$, then $\rho(B^{-1}C)\leq \rho(U^{-1}V)<1$.
\end{cor}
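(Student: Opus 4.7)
My plan is to obtain this corollary as a direct specialization of Theorem~\ref{3.6}. In the nonsingular setting every splitting is automatically proper, since $R(U)=R(A)=\mathbb{R}^{n}$ and $N(U)=N(A)=\{0\}$; the group inverses collapse to ordinary inverses, a regular splitting becomes a proper G-regular splitting, a weak regular splitting becomes a proper G-weak regular splitting, and the hypothesis $A^{-1}\geq 0$ is exactly the group monotonicity of $A$. Under this dictionary Theorem~\ref{3.6} applies verbatim and yields the conclusion.

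If one instead prefers to write the argument out (to see how it simplifies from the group inverse version), I would proceed as follows. Theorems~\ref{3.1} and~\ref{3.2}, specialized to invertible $A$, give $\rho(U^{-1}V)<1$ and $\rho(B^{-1}C)<1$. Since $B^{-1}C\geq 0$, Theorem~\ref{2.1} produces a nonzero $x\geq 0$ with $x^{t}B^{-1}C=\rho(B^{-1}C)x^{t}$. From Theorem~\ref{2.5}(e) in the invertible case we have
\begin{equation*}
A^{-1}=(I-B^{-1}C)^{-1}B^{-1}=U^{-1}(I-VU^{-1})^{-1},
\end{equation*}
so the hypothesis $B^{-1}\geq U^{-1}$ rearranges to $(I-B^{-1}C)A^{-1}\geq A^{-1}(I-VU^{-1})$ and then to $B^{-1}CA^{-1}\leq A^{-1}VU^{-1}$. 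Pre-multiplying by $x^{t}$, transposing, and setting $z=(A^{-1})^{t}x$, I land at $\rho(B^{-1}C)\,z\leq (VU^{-1})^{t}z$.

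The last point is that $z\neq 0$. In Theorem~\ref{3.6} this required a projector-plus-range argument, but here the nonsingularity of $A$ trivialises it: $(A^{-1})^{t}$ is invertible, so $z=(A^{-1})^{t}x=0$ would force $x=0$, contradicting the choice of $x$. Since $z\geq 0$ and $z\neq 0$, Theorem~\ref{2.4}(i) gives $\rho(B^{-1}C)\leq \rho((VU^{-1})^{t})=\rho(VU^{-1})=\rho(U^{-1}V)<1$, as required. I do not anticipate any genuine obstacle; the invertibility of $A$ removes the two delicate ingredients (handling of range spaces and the verification that $z$ is nonzero) that needed care in the proof of Theorem~\ref{3.6}.
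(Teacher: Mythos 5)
Your proposal is correct and matches the paper's intent exactly: the paper presents this corollary as the nonsingular specialization of Theorem~\ref{3.6} (citing Elsner only for the original source), and your dictionary between regular/weak regular splittings of an invertible matrix and proper G-regular/G-weak regular splittings of a group monotone matrix is precisely the right reduction. Your optional unpacked argument also faithfully reproduces the proof of Theorem~\ref{3.6} in the invertible case, including the correct observation that the nonsingularity of $A$ trivialises the verification that $z\neq 0$.
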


\section{Three-step  Alternating Iterations}

Throughout this section, we consider the co-efficient matrix $A$ in (\ref{eq1}) as of index 1 unless otherwise mentioned. Let $A = K-L = U-V = X-Y$ be three proper splittings of $A\in\mathbb{R}^{n\times{n}}$. Let us consider the following  iterative schemes:\\
\begin{equation}\label{eq3}
x^{k+1/3} = K^{\#}Lx^{k}+K^{\#}b
\end{equation}
\begin{equation}\label{eq4}
x^{k+1/2} = U^{\#}Vx^{k+1/3}+U^{\#}b
\end{equation}
\begin{equation}\label{eq5}
x^{k+1} = X^{\#}Yx^{k+1/2}+X^{\#}b
\end{equation}
to introduce the  three-step alternating iteration scheme.    We form a single iteration  scheme by eliminating  $x^{k+1/3}$ and $x^{k+1/2}$ from (\ref{eq3}), (\ref{eq4}) and (\ref{eq5}) to do computation. So, we get\\
\begin{equation}\label{eq6}
x^{k+1} =
X^{\#}YU^{\#}VK^{\#}Lx^{k}+X^{\#}(YU^{\#}VK^{\#}+YU^{\#}+I)b,\hspace{0.1in}{k
= 0,1,2,\cdots},
\end{equation}
where $H = X^{\#}YU^{\#}VK^{\#}L$ is the iteration matrix of the new
iterative scheme (\ref{eq6}) called as the {\it three-step
alternating iteration scheme}. The convergence of the individual
splitting need not imply the convergence of the three-step
alternating iteration scheme (\ref{eq6}) which is shown in the
following example.
\begin{example}\label{ex1}
Let $A =
  \begin{bmatrix}
    4&4&10\\
    7&-29&31\\
    -1&11&-7
  \end{bmatrix},
     K =
  \begin{bmatrix}
    17.6&0.8&50.3\\
    -41.2&-41.8&-102.775\\
    19.6&14.2&51.025
  \end{bmatrix},$
  \begin{equation*}
  U =
  \begin{bmatrix}
    2.4&15.2&1.2\\
    18.6&-31&65.1\\
    -5.4&15.4&-21.3
  \end{bmatrix},
  X =
  \begin{bmatrix}
    5.6&2.4&15.2\\
   -91&-111&-220\\
    32.2&37.8&78.4
  \end{bmatrix}.
  \end{equation*}
  Here $A = K-L = U-V = X-Y$ are three proper splittings. Also $\rho(K^{\#}L) = 0.6835<1$, $\rho(U^{\#}V) = 0.5957<1$, $\rho(X^{\#}Y) = 0.8452<1$ but $\rho(H) = 1.3579\not<1$.
  \end{example}
 Note that all the computations in this paper are made in fractions but for the presentation point of view, we have rounded to 4 decimal places. So, there might be a little rounding error.
The algorithm for the three-step alternating iterations is produced
next, and the same is also used in Example \ref{ex1}.


Example \ref{ex1} motivates further to study the
 convergence criteria of the three-step alternating iterations, and the next result is in the same direction.

\begin{theorem}\label{conv}
If $A = K-L = U-V = X-Y$ are three proper G-weak regular splittings
of a group monotone matrix $A$, then $\rho(X^{\#}YU^{\#}VK^{\#}L)
<1$.
\end{theorem}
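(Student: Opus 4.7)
The plan is to adapt the strategy of Theorem \ref{3.6} to the three-step iteration matrix $H=X^{\#}YU^{\#}VK^{\#}L$. First I would observe that each of $X^{\#}Y$, $U^{\#}V$, $K^{\#}L$ is non-negative by the proper G-weak regular splitting assumption, so their product $H\geq 0$. By Theorem \ref{2.1}, there exists $x\geq 0$, $x\neq 0$, with $x^{t}H=\rho(H)x^{t}$. I may assume $\rho(H)>0$ (otherwise there is nothing to prove), and then $H^{t}x=\rho(H)x$ gives $x\in R(H^{t})\subseteq R(L^{t})\subseteq R(A^{t})$, the last inclusion because $L=K-A$ and $N(K)=N(A)$ force $N(L)\supseteq N(A)$.

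The algebraic core is the identity
$$
(I-H)A^{\#}=X^{\#}+X^{\#}YU^{\#}+X^{\#}YU^{\#}VK^{\#}.
$$
To prove it I would use $K^{\#}K=P_{R(A),N(A)}=A^{\#}A$ together with $K^{\#}AA^{\#}=K^{\#}$ (which holds because $N(K^{\#})=N(K)=N(A)$), giving $K^{\#}LA^{\#}=A^{\#}-K^{\#}$, and analogously $U^{\#}VA^{\#}=A^{\#}-U^{\#}$ and $X^{\#}YA^{\#}=A^{\#}-X^{\#}$. Iteratively substituting these into $X^{\#}Y\,U^{\#}V\,K^{\#}L\cdot A^{\#}$ yields the identity. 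Each summand on the right is non-negative (as $X^{\#},U^{\#},K^{\#},X^{\#}Y,U^{\#}V$ are all $\geq 0$), so $(I-H)A^{\#}\geq 0$, and combined with $A^{\#}\geq 0$ this gives $HA^{\#}\leq A^{\#}$.

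Pre-multiplying by $x^{t}\geq 0$ and using $x^{t}H=\rho(H)x^{t}$, I obtain $(1-\rho(H))\,x^{t}A^{\#}=x^{t}(X^{\#}+X^{\#}YU^{\#}+X^{\#}YU^{\#}VK^{\#})\geq 0$. Setting $z=(A^{\#})^{t}x$ and transposing gives $(1-\rho(H))z\geq 0$. Mimicking the end of the proof of Theorem \ref{3.6}, one sees $z\neq 0$: if $z=0$ then $(A^{t})^{\#}x=0$, and combined with $x\in R(A^{t})$ one obtains $x=A^{t}(A^{t})^{\#}x=0$, contradicting $x\neq 0$. Hence $\rho(H)\leq 1$.

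The main obstacle is excluding the equality $\rho(H)=1$, since the inequality $A^{\#}\geq HA^{\#}$ is not strict in any obvious direction. I would handle this by noting that $\rho(H)=1$ would force $x^{t}(X^{\#}+X^{\#}YU^{\#}+X^{\#}YU^{\#}VK^{\#})=0$; as each of the three summands is non-negative, each term vanishes separately. In particular $x^{t}X^{\#}=0$ places $x\in N(X^{t})=R(X)^{\perp}=R(A)^{\perp}=N(A^{t})$. Because $A^{t}$ is also of index $1$, $R(A^{t})\cap N(A^{t})=\{0\}$, contradicting $x\neq 0$. Therefore $\rho(H)<1$.
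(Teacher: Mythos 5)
Your proof is correct, and it shares its algebraic core with the paper's: both arguments hinge on the identity $(I-H)A^{\#}=X^{\#}+X^{\#}YU^{\#}+X^{\#}YU^{\#}VK^{\#}\geq 0$. Your derivation of it, via the telescoping relations $K^{\#}LA^{\#}=A^{\#}-K^{\#}$, $U^{\#}VA^{\#}=A^{\#}-U^{\#}$, $X^{\#}YA^{\#}=A^{\#}-X^{\#}$ (each resting on $K^{\#}K=A^{\#}A$ and $K^{\#}AA^{\#}=K^{\#}$), is cleaner than the paper's term-by-term expansion of $X^{\#}(X-A)U^{\#}(U-A)K^{\#}(K-A)$, but it is the same identity. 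Where you genuinely diverge is in how the inequality $(I-H)A^{\#}\geq 0$ is converted into $\rho(H)<1$. The paper argues that $0\leq (I+H+\cdots+H^{m})(I-H)A^{\#}\leq A^{\#}$ forces the partial sums of $\sum_m H^{m}$ to be uniformly bounded; in the singular setting this step is rather terse, since boundedness of $\sum_m H^{m}(I-H)A^{\#}$ does not transfer to $\sum_m H^{m}$ without an extra observation (e.g.\ that $HA^{\#}A=H$, so one can post-multiply by $B=A(I-H)^{-1}$-type factors). You instead take a left Perron vector $x^{t}H=\rho(H)x^{t}$, locate $x\in R(A^{t})$, obtain $(1-\rho(H))z\geq 0$ with $z=(A^{\#})^{t}x\geq 0$, $z\neq 0$ (the same nondegeneracy argument used in Theorem \ref{3.6}), and then exclude $\rho(H)=1$ by noting that equality would force $x^{t}X^{\#}=0$, placing $x\in R(A)^{\perp}=N(A^{t})$ while $x\in R(A^{t})$ and $A^{t}$ has index one. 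This closing argument is entirely self-contained, reuses the machinery of Theorem \ref{2.1}, Theorem \ref{2.4} and the proof of Theorem \ref{3.6}, and arguably patches the one soft spot in the paper's version; the price is a slightly longer case analysis ($\rho(H)=0$, the eigenvector location, and the equality case).
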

\begin{proof}
 We have $H=X^{\#}YU^{\#}VK^{\#}L\geq 0$ as $A = K-L = U-V = X-Y$ are three proper G-weak regular splittings. By Theorem \ref{2.5} (b), $A^{\#}A = K^{\#}K = U^{\#}U = X^{\#}X.$ Since
 \begin{eqnarray*}\label{eqnp}
\nonumber 
X^{\#}AU^{\#}AK^{\#}A&=&  X^{\#}(X-Y)U^{\#}(U-V)K^{\#}A\\
\nonumber
  &=& X^{\#}XU^{\#}UK^{\#}A-X^{\#}XU^{\#}VK^{\#}A-X^{\#}YU^{\#}UK^{\#}A+X^{\#}YU^{\#}VK^{\#}A\\
   &=& K^{\#}A-U^{\#}VK^{\#}A-X^{\#}YK^{\#}A+X^{\#}YU^{\#}VK^{\#}A,
 \end{eqnarray*} so the iteration matrix $H$ is expressed as
\begin{eqnarray}\label{eqnh}
 \nonumber
 H &=& X^{\#}YU^{\#}VK^{\#}L\\
 \nonumber
   &=& X^{\#}(X-A)U^{\#}(U-A)K^{\#}(K-A)\\
 \nonumber
   &=& (U^{\#}U-U^{\#}A-X^{\#}A+X^{\#}AU^{\#}A)(K^{\#}K-K^{\#}A) \\
 \nonumber
   &=& A^{\#}A-U^{\#}A-X^{\#}A+X^{\#}AU^{\#}A-K^{\#}A\\
   \nonumber
   & & ~~+U^{\#}AK^{\#}A+X^{\#}AK^{\#}A-X^{\#}AU^{\#}AK^{\#}A.\\
   \nonumber
 &=& A^{\#}A-U^{\#}A-X^{\#}A+X^{\#}AU^{\#}A-K^{\#}A+U^{\#}AK^{\#}A+X^{\#}AK^{\#}A\\
 \nonumber
   & &~~-K^{\#}A+U^{\#}VK^{\#}A+X^{\#}YK^{\#}A-X^{\#}YU^{\#}VK^{\#}A.
 \end{eqnarray}
 This implies
 \begin{eqnarray*}
   HA^{\#} &=& A^{\#}-U^{\#}-X^{\#}+X^{\#}AU^{\#}-K^{\#}+U^{\#}AK^{\#}\\
   && ~+X^{\#}AK^{\#}-K^{\#}+U^{\#}VK^{\#}+X^{\#}YK^{\#}-X^{\#}YU^{\#}VK^{\#},
   \end{eqnarray*}
   and hence
\begin{eqnarray*}
  (I-H)A^{\#}&=&U^{\#}+X^{\#}-X^{\#}AU^{\#}+K^{\#}-U^{\#}AK^{\#}-X^{\#}AK^{\#}\\
 &&~~+K^{\#}-U^{\#}VK^{\#}-X^{\#}YK^{\#}+
  X^{\#}YU^{\#}VK^{\#}\\
  &=& U^{\#}+X^{\#}-U^{\#}(A+V)K^{\#}-X^{\#}(A+Y)K^{\#}-X^{\#}AU^{\#}\\
  &&~~+K^{\#}+X^{\#}YU^{\#}VK^{\#}+K^{\#}\\
   &=& X^{\#}XU^{\#}-X^{\#}AU^{\#}+X^{\#}+X^{\#}YU^{\#}VK^{\#}\\
   & =& X^{\#}YU^{\#}+X^{\#}+X^{\#}YU^{\#}VK^{\#}\geq 0.
\end{eqnarray*}
Now, $0 \leq (I+H+H^{2}+\cdots+H^{m})(I-H)A^{\#}\leq A^{\#}$ for
each non-negative integer $m$. Therefore, the partial sums of the
series $\displaystyle{\sum\limits_{m=0}^{\infty}H^{m}}$ remain
uniformly bounded in norm. Hence $\rho(H) =
\rho(X^{\#}YU^{\#}VK^{\#}L) <1$.
\end{proof}

We have the following result in  case of nonsingular $A$.

\begin{cor}\label{convcor}
 If $A = K-L = U-V = X-Y$ are three weak regular splittings of a  monotone matrix $A$, then $\rho(X^{-1}YU^{-1}VK^{-1}L) <1$.
\end{cor}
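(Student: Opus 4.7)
The plan is to recognize that Corollary \ref{convcor} is simply the specialization of Theorem \ref{conv} to the nonsingular setting, and therefore no new work is required beyond verifying that the nonsingular hypotheses imply the hypotheses of the theorem.

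First I would observe that when $A \in \mathbb{R}^{n \times n}$ is nonsingular, the group inverse $A^{\#}$ exists and equals $A^{-1}$; likewise for any $K, U, X$ with $K, U, X$ nonsingular, we have $K^{\#} = K^{-1}$, $U^{\#} = U^{-1}$, $X^{\#} = X^{-1}$. Since $R(K) = R(U) = R(X) = \mathbb{R}^{n} = R(A)$ and $N(K) = N(U) = N(X) = \{0\} = N(A)$, each of the three splittings $A = K-L = U-V = X-Y$ is automatically a proper splitting in the sense of \cite{c4}.

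Next I would translate the defining conditions. A weak regular splitting $A = K-L$ requires $K^{-1} \geq 0$ and $K^{-1}L \geq 0$, which is exactly the condition that $K^{\#} \geq 0$ and $K^{\#}L \geq 0$; hence each of the three splittings is a proper G-weak regular splitting. The assumption that $A$ is monotone means $A^{-1} \geq 0$, i.e., $A^{\#} \geq 0$, so $A$ is group monotone.

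With every hypothesis of Theorem \ref{conv} verified, I would apply that theorem directly to obtain
\begin{equation*}
\rho(X^{\#}YU^{\#}VK^{\#}L) < 1,
\end{equation*}
which in the nonsingular case reads $\rho(X^{-1}YU^{-1}VK^{-1}L) < 1$, completing the proof. There is no genuine obstacle here; the only subtlety worth a line in the write-up is pointing out that nonsingularity of $K$, $U$, $X$ forces the range/nullspace conditions of a proper splitting to hold trivially, so that Theorem \ref{conv} is applicable verbatim.
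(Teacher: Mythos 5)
Your proposal is correct and matches the paper's (implicit) argument exactly: the corollary is stated as the nonsingular specialization of Theorem \ref{conv}, and your verification that nonsingularity makes each splitting a proper G-weak regular splitting and that monotonicity gives group monotonicity is precisely what is needed. No further comment is required.
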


The above one extends the convergence criteria of two-step
alternating iteration scheme proved by Benzi and Szyld in the
 first part of Theorem 3.2, \cite{benz}. The same is produced next as a corollary.

\begin{cor}(Theorem 3.2, \cite{benz})
 If $A  = U-V = X-Y$ are two weak regular splittings of a  monotone matrix $A$, then $\rho(X^{-1}YU^{-1}V) <1$.
\end{cor}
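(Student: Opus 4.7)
The plan is to specialize the Neumann-series argument used in the proof of Theorem \ref{conv} to the two-splitting setting. Write $H = X^{-1}YU^{-1}V$ for the iteration matrix. The aim is to establish (i) $H \geq 0$ and (ii) $(I-H)A^{-1} \geq 0$; once both are in hand, the familiar telescoping bound $0 \leq (I + H + \cdots + H^{m})(I-H)A^{-1} \leq A^{-1}$ holds for every $m$, which forces the partial sums of $\sum H^{m}$ to remain uniformly bounded in norm and hence $\rho(H) < 1$.

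Step (i) is immediate, since $H$ is the product of the two non-negative matrices $X^{-1}Y$ and $U^{-1}V$ coming from the two weak regular splittings. For step (ii), I would expand $H = X^{-1}(X-A)U^{-1}(U-A)$ and collect terms to get $H = I - U^{-1}A - X^{-1}A + X^{-1}AU^{-1}A$; right-multiplying by $A^{-1}$ then gives
\begin{equation*}
(I-H)A^{-1} = U^{-1} + X^{-1} - X^{-1}AU^{-1}.
\end{equation*}
The decisive move is to use $X^{-1}A = I - X^{-1}Y$ to rewrite $X^{-1}AU^{-1} = U^{-1} - X^{-1}YU^{-1}$. Substituting yields
\begin{equation*}
(I-H)A^{-1} = X^{-1} + X^{-1}YU^{-1},
\end{equation*}
and the right-hand side is non-negative because $X^{-1} \geq 0$, $X^{-1}Y \geq 0$, and $U^{-1} \geq 0$.

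With (i) and (ii) in place, the proof closes exactly as in Theorem \ref{conv}: the chain $0 \leq (I + H + \cdots + H^{m})(I-H)A^{-1} \leq A^{-1}$ holds for every $m$, so the partial sums of the Neumann series $\sum H^{m}$ are uniformly bounded in norm, and this is equivalent to $\rho(H) < 1$.

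The main, and rather minor, obstacle is spotting the rewrite of $X^{-1}AU^{-1}$ that converts the ostensibly sign-indefinite expression $U^{-1} + X^{-1} - X^{-1}AU^{-1}$ into the manifestly non-negative $X^{-1} + X^{-1}YU^{-1}$. One might hope instead to deduce this corollary from Corollary \ref{convcor} by choosing a trivial third splitting (for instance $K = A$, $L = 0$), but such a choice collapses the iteration matrix to zero and yields no information about $\rho(X^{-1}YU^{-1}V)$, so a direct proof along the lines above is needed.
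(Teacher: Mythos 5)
Your proof is correct and is essentially the paper's own argument: the paper states this result as a corollary of Theorem \ref{conv} (via Corollary \ref{convcor}) without writing a separate proof, and your computation of $(I-H)A^{-1} = X^{-1} + X^{-1}YU^{-1} \geq 0$ followed by the uniform boundedness of the partial sums of $\sum_m H^m$ is exactly the specialization of the proof of Theorem \ref{conv} to two splittings in the nonsingular setting. Your side remark is also apt: the statement does not follow formally from the three-splitting version by inserting a trivial third splitting such as $K = A$, $L = 0$ (which annihilates the iteration matrix), so re-running the Neumann-series argument directly, as you do, is the right way to justify the ``corollary.''
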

The next example shows that the converse of  Theorem \ref{conv} is
not  true.
\begin{example}
Let $A =
  \begin{bmatrix}
    -11&4&15\\
    12&2&9\\
    23&-2&-6
  \end{bmatrix}.$ Now

\begin{eqnarray*}
  A &=&
  \begin{bmatrix}
    -33.5&20&76.8429\\
    62&10&45.1714\\
   95.5&-10&-31.6714
  \end{bmatrix}
  -
  \begin{bmatrix}
    -22.5&16&61.8429\\
    50&8&36.1714\\
    72.5&-8&-25.6714
  \end{bmatrix}
   = K-L \\
   &=&
  \begin{bmatrix}
    -58&53&206.271\\
    41&-26&-100.114\\
    99&-79&-306.386
  \end{bmatrix}
  -
  \begin{bmatrix}
    -47&49&191.271\\
    29&-28&-109.114\\
    76&-77&-300.386
  \end{bmatrix}=U-V \\
   &=& \begin{bmatrix}
    -53&39.5&152.893\\
    61&-24&-90.4286\\
    114&-63.5&-243.321
  \end{bmatrix}
  -
  \begin{bmatrix}
    -42&35.5&137.893\\
     49&-26&-99.4286\\
     91&-61.5&-237.321
  \end{bmatrix}=X-Y
\end{eqnarray*}
 are three proper splittings of $A$. Then $\rho(H) = \rho(X^{\#}YU^{\#}VK^{\#}L) = 0.4938<1$. Now, the individual splitting have the following property:
\begin{eqnarray*}
  K^{\#} &=& \begin{bmatrix}
     0.0029&0.0025&0.0101\\
     0.0138&0.0026&0.0117\\
     0.0109&0.0002&0.0016
  \end{bmatrix}
  \geq 0
\mbox{ and~} K^{\#}L =
\begin{bmatrix}
     0.7883&-0.0140&0.0118\\
     0.6658&0.1493&0.6520\\
    -0.1224&0.1633&0.6402
  \end{bmatrix}
  \not\geq 0,\\
  & &\\
  U^{\#} &=& \begin{bmatrix}
     0.1041&0.0142&0.0654\\
     0.1100&0.0144&0.0667\\
     0.0059&0.0002&0.0013
  \end{bmatrix}
  \geq 0
\mbox{ and~} U^{\#}V =
\begin{bmatrix}
     0.4884&-0.3314&-1.2792\\
     0.3166&-0.1488&-0.5661\\
     -0.1719&0.1826&0.7131
  \end{bmatrix}
\not\geq {0},\\
& &\\
  X^{\#} &=& \begin{bmatrix}
     0.0490&0.0061&0.0284\\
     0.0577&0.0064&0.0305\\
     0.0087&0.0003&0.0021
  \end{bmatrix}
  \geq 0
\mbox{ and~} X^{\#}Y =
\begin{bmatrix}
     0.8291&-0.1687&-0.6011\\
     0.6690&0.0040&0.0734\\
     -0.1601&0.1727&0.6745
     \end{bmatrix}
\not\geq {0}.
\end{eqnarray*}
Hence $A = K-L = U-V = X-Y$ are not G-weak regular splittings of
$A$.
\end{example}

 The following result shows that the iteration matrix $H$ of the three-step alternating iterations induces a unique proper G-weak regular splitting.

\begin{theorem}\label{ind}
Let $A = K-L = U-V = X-Y$ be three proper G-weak regular splittings
of a group monotone matrix $A.$ If $R(A) = R(K+X-A+YU^{\#}L)$ and
$N(A) = N(K+X-A+YU^{\#}L),$ then there exists a unique  proper
G-weak regular splitting $A = B-C$ induced by $H$ with $B =
K(K+X-A+YU^{\#}L)^{\#}X.$
\end{theorem}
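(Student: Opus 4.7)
The plan is to reverse-engineer $B$ from the requirement $B^{\#}C = H$, and then recognize the resulting matrix as $KM^{\#}X$ with $M := K+X-A+YU^{\#}L$. If $A = B-C$ is any proper splitting with $B^{\#}C = H$, then part $(e)$ of Theorem \ref{2.5} gives $A^{\#} = (I-H)^{-1}B^{\#}$, forcing $B^{\#} = (I-H)A^{\#}$. In the course of proving Theorem \ref{conv} it was already shown that
\[
(I-H)A^{\#} \;=\; X^{\#} + X^{\#}YU^{\#} + X^{\#}YU^{\#}VK^{\#},
\]
so $B^{\#}$ is uniquely determined by $H$, and with it $B=(B^{\#})^{\#}$ by the index-one involution. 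What remains is to identify this candidate with $KM^{\#}X$, verify the proper G-weak regular structure, and dispose of uniqueness.

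First I would use the hypothesis $R(M)=R(A)$, $N(M)=N(A)$ together with the index-one decomposition $R(A)\oplus N(A)=\R^n$ to conclude that $M$ has index one, so $M^{\#}$ exists and $M^{\#}M = MM^{\#} = P$, where $P := AA^{\#}$. Setting $B := KM^{\#}X$, checking that $B^{\#} := X^{\#}MK^{\#}$ is the group inverse reduces to collapsing projectors via the identities $K^{\#}K = XX^{\#} = M^{\#}M = P$, $PM = MP = M$, $PK = K$, $PX = X$, $X^{\#}P = X^{\#}$, $PK^{\#} = K^{\#}$: all three group-inverse axioms and $BB^{\#}=B^{\#}B=P$ fall out by direct substitution, and simultaneously $R(B)=R(K)=R(A)$, $N(B)=N(X)=N(A)$, so $A = B-C$ with $C := B-A$ is a proper splitting. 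Expanding
\[
X^{\#}MK^{\#} = X^{\#}KK^{\#} + X^{\#}XK^{\#} - X^{\#}AK^{\#} + X^{\#}YU^{\#}LK^{\#}
\]
and substituting $L = K-A$, $Y = X-A$, $V = U-A$ together with $YP = Y$ and the above projector identities reduces both sides to $X^{\#} + X^{\#}YU^{\#} + X^{\#}YK^{\#} - X^{\#}YU^{\#}AK^{\#}$, establishing $B^{\#} = (I-H)A^{\#}$. Then $B^{\#}C = B^{\#}B - B^{\#}A = P - (I-H)P = HP = H$, using $LP = L$ (since $N(L) \supseteq N(A)$), so the splitting is indeed induced by $H$.

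Non-negativity is immediate from the G-weak regular hypotheses: each summand in $B^{\#} = X^{\#} + (X^{\#}Y)U^{\#} + (X^{\#}Y)(U^{\#}V)K^{\#}$ is a product of non-negative matrices, and $B^{\#}C = H = (X^{\#}Y)(U^{\#}V)(K^{\#}L) \geq 0$. For uniqueness, if $A = B'-C'$ is any other proper splitting with $(B')^{\#}C' = H$, then Theorem \ref{2.5}$(e)$ again forces $(B')^{\#} = (I-H)A^{\#} = B^{\#}$, and since the group inverse of an index-one matrix recovers the original matrix, $B = (B^{\#})^{\#} = ((B')^{\#})^{\#} = B'$, whence $C = C'$. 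The one delicate step is the algebraic identification $X^{\#}MK^{\#} = (I-H)A^{\#}$; it is essentially bookkeeping with the projector $P$, but one must invoke $YP = Y$, $PK^{\#} = K^{\#}$, and $X^{\#}P = X^{\#}$ in just the right places so that the extra $K^{\#}$ and $X^{\#}$ terms from the expansion cancel and the two sides visibly agree.
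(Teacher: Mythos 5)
Your proof is correct and follows essentially the same route as the paper: both identify $B^{\#}=X^{\#}(K+X-A+YU^{\#}L)K^{\#}=(I-H)A^{\#}$, verify that $K(K+X-A+YU^{\#}L)^{\#}X$ satisfies the group-inverse axioms through the common projector $P=AA^{\#}$, and read off non-negativity and uniqueness from the same identities. The one genuine streamlining on your side is that you obtain $R(B)=R(K)=R(A)$ and $N(B)=N(X)=N(A)$ directly from $BB^{\#}=B^{\#}B=P$, whereas the paper detours through $\rho(H)<1$ and the factorization $B=A(I-H)^{-1}$ to establish the range equality.
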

\begin{proof}
It is specified that  $A = K-L = U-V = X-Y$ are three proper
splittings of $A$. So, by Theorem \ref{2.5} (b), $A^{\#}A = K^{\#}K
= U^{\#}U = X^{\#}X$ and $AA^{\#} = KK^{\#} = UU^{\#} = XX^{\#}.$
Equation (\ref{eq6}) yields
\begin{eqnarray*}
 B^{\#} &=& X^{\#}(YU^{\#}VK^{\#}+YU^{\#}+I) \\
   &=& X^{\#}(X-A)U^{\#}(U-A)K^{\#}+X^{\#}(X-A)U^{\#}+X^{\#} \\
  &=& X^{\#}XU^{\#}UK^{\#}-X^{\#}XU^{\#}AK^{\#}-X^{\#}AU^{\#}UK^{\#} \\
   &&~~ +X^{\#}AU^{\#}AK^{\#}+X^{\#}XU^{\#}-X^{\#}AU^{\#}+X^{\#}XX^{\#}\\
  &=& X^{\#}XK^{\#}-X^{\#}XU^{\#}AK^{\#}-X^{\#}AK^{\#}+X^{\#}AU^{\#}AK^{\#}\\
   &&~~+X^{\#}XU^{\#}KK^{\#}-X^{\#}AU^{\#}KK^{\#}+
X^{\#}KK^{\#}\\
   &=&X^{\#}(X-XU^{\#}A-A+AU^{\#}A+XU^{\#}K-AU^{\#}K+K)K^{\#}.
\end{eqnarray*}
On further simplification of $AU^{\#}A-XU^{\#}A-AU^{\#}K+XU^{\#}K$,
we get $AU^{\#}A-XU^{\#}A-AU^{\#}K+XU^{\#}K=(A-X)(U^{\#}A-U^{\#}K)
   = YU^{\#}L.$
Therefore,
\begin{equation}\label{equation177}
B^{\#} = X^{\#}(K+X-A+YU^{\#}L)K^{\#}.
\end{equation}
Since $R(K+X-A+YU^{\#}L) = R(A)$ and $N(K+X-A+YU^{\#}L) = N(A)$, we
have
$(K+X-A+YU^{\#}L)^{\#}(K+X-A+YU^{\#}L)=(K+X-A+YU^{\#}L)(K+X-A+YU^{\#}L)^{\#}.$
 Let $G = K(K+X-A+YU^{\#}L)^{\#}X$, then $B^{\#}GB^{\#} = X^{\#}(K+X-A+YU^{\#}L)K^{\#}K(K+X-A+YU^{\#}L)^{\#}XX^{\#}(K+X-A+YU^{\#}L)K^{\#} = B^{\#}$ and $GB^{\#}G = K(K+X-A+YU^{\#}L)^{\#}XX^{\#}(K+X-A+YU^{\#}L)K^{\#}K(K+X-A+YU^{\#}L)^{\#}X = G$. Also $B^{\#}G = X^{\#}(K+X-A+YU^{\#}L)K^{\#}K(K+X-A+YU^{\#}L)^{\#}X = X^{\#}X = XX^{\#} = K(K+X-A+YU^{\#}L)^{\#}XX^{\#}(K+X-A+YU^{\#}L)K^{\#} = GB^{\#}$. Hence $G = (B^{\#})^{\#} = B$.\\
    Next, we prove that $A = B-C$ is a proper splitting. First, we show that $N(A) = N(B)$. Clearly, $N(X)\subseteq N(B)$ since $B = K(K+X-A+YU^{\#}L)^{\#}X$. Let $Bx = 0$ which implies    $    K(K+X-A+YU^{\#}L)^{\#}Xx = 0.$
    Pre-multiplying  by $K^{\#}$,  we get     $(K+X-A+YU^{\#}L)^{\#}Xx = 0.$

    Again, pre-multiplying the last equation by $ K+X-A+YU^{\#}L$, we have $Xx = 0$. So $N(B)\subseteq N(X)$. Hence $ N(A) = N(B)$. From (\ref{equation177}), we have
    \begin{eqnarray*}
      B^{\#} &=& X^{\#}(K+X-A+YU^{\#}L)K^{\#} \\
       &=& X^{\#}+K^{\#}-X^{\#}AK^{\#}+X^{\#}(X-A)U^{\#}(K-A)K^{\#} \\
       &=& X^{\#}+K^{\#}-X^{\#}AK^{\#}+U^{\#}-U^{\#}AK^{\#}-X^{\#}AU^{\#}+X^{\#}AU^{\#}AK^{\#} \\
       &=& A^{\#}-(A^{\#}-U^{\#}-X^{\#}+X^{\#}AU^{\#}-K^{\#}+U^{\#}AK^{\#}+X^{\#}AK^{\#}-X^{\#}AU^{\#}AK^{\#})\\
       &=& A^{\#}-(A^{\#}AA^{\#}-U^{\#}AA^{\#}-X^{\#}AA^{\#}+X^{\#}AU^{\#}AA^{\#}-K^{\#}AA^{\#}\\
       &&~~+U^{\#}AK^{\#}AA^{\#}+
    X^{\#}AK^{\#}AA^{\#}-X^{\#}AU^{\#}AK^{\#}AA^{\#})\\
    &=& A^{\#}-(U^{\#}U-U^{\#}A-X^{\#}A+X^{\#}AU^{\#}A)(K^{\#}K-K^{\#}A)A^{\#}\\
    &=& A^{\#}-X^{\#}YU^{\#}VK^{\#}LA^{\#} = (I-H)A^{\#}.
    \end{eqnarray*}
     But, we have  $\rho(H)<1$ by Theorem \ref{conv}.  So, $I-H$ is nonsingular by Theorem \ref{2.3}. Let $G_1 = B = A(I-H)^{-1}$. Now $B^{\#}G_1B^{\#} = (I-H)A^{\#}A(I-H)^{-1}(I-H)A^{\#} = B^{\#}$. Similarly, $G_1B^{\#}G_1 = G_1$. Again, $B^{\#}G_1 = (I-H)A^{\#}A(I-H)^{-1} = (A^{\#}A-H)(I-H)^{-1} = (A^{\#}A-A^{\#}AH)(I-H)^{-1} = A^{\#}A = AA^{\#} = G_1B^{\#}$. Therefore, $B = A(I-H)^{-1}$. Hence $A=B(I-H)$, and thus $R(A) = R(B)$. Therefore, $A = B-C$ is a proper splitting. Next, to show uniqueness of proper splitting $A = B-C$. Suppose there exists another induced splitting $A = B_1-C_1$ such that $H = B_1^{\#}C_1$. Then $B_1H = B_1B_1^{\#}C_1 = C_1 = B_1-A$. So, we get $B_1(I-H) = A$ and thus $B_1 = A(I-H)^{-1} = B$. Finally, $B^{\#} = X^{\#}(YU^{\#}VK^{\#}+YU^{\#}+I) = X^{\#}YU^{\#}VK^{\#}+X^{\#}YU^{\#}+X^{\#}\geq 0$ and $B^{\#}C = X^{\#}YU^{\#}VK^{\#}L\geq 0$. Therefore, $A = B-C$ is a unique proper G-weak regular splitting.
\end{proof}

The above result in  case of a nonsingular monotone matrix is stated
by the following corollary.

\begin{cor}\label{indcor}
Let $A = K-L = U-V = X-Y$ be three weak regular splittings of a
monotone matrix $A.$  Then there exists a unique  weak regular
splitting $A = B-C$ induced by $H$ with $B =
K(K+X-A+YU^{-1}L)^{-1}X.$
\end{cor}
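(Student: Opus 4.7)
The plan is to mirror the proof of Theorem \ref{ind} but take advantage of the fact that, when $A$ is nonsingular and monotone, there are no range/null space hypotheses to check: every matrix whose invertibility we assert will inherit it from a finite composition of invertible factors. First I would invoke Corollary \ref{convcor} to conclude $\rho(H)<1$, so that $I-H$ is nonsingular by Theorem \ref{2.3}. This lets me define $B := A(I-H)^{-1}$, which is nonsingular as a product of nonsingular matrices, and set $C := B - A$ so that $A = B - C$ is a (candidate) splitting.

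Next I would verify that this $B$ realizes $H$ as its iteration matrix and that $A=B-C$ is weak regular. Directly from $B = A(I-H)^{-1}$ one gets $B^{-1}C = B^{-1}(B-A) = I - B^{-1}A = I - (I-H) = H$, which simultaneously shows $H = B^{-1}C \geq 0$ (since all three proper splittings were weak regular, the product $X^{-1}YU^{-1}VK^{-1}L$ is nonnegative). To see $B^{-1}\geq 0$, I would read off from the iteration (\ref{eq6}) that $B^{-1} = X^{-1}(YU^{-1}VK^{-1} + YU^{-1} + I)$, which is manifestly nonnegative. Together these give the weak regular splitting property.

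The identification $B = K(K+X-A+YU^{-1}L)^{-1}X$ is the point that requires work, and it is where the main obstacle lies: I need the algebraic identity
\begin{equation*}
X^{-1}\bigl(YU^{-1}VK^{-1} + YU^{-1} + I\bigr) = X^{-1}\bigl(K + X - A + YU^{-1}L\bigr)K^{-1},
\end{equation*}
which in the nonsingular case collapses the long chain of identities used in Theorem \ref{ind}. Expanding the right-hand side, writing $Y = X - A$ and $L = K - A$, and using $AU^{-1}A - XU^{-1}A - AU^{-1}K + XU^{-1}K = (A-X)(U^{-1}A - U^{-1}K) = YU^{-1}L$ is exactly the manipulation already carried out in the proof of Theorem \ref{ind}; in the invertible setting every $A^{\#}A$, $K^{\#}K$, $U^{\#}U$, $X^{\#}X$ becomes the identity, so the bookkeeping is genuinely shorter. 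Once this identity is in hand, it shows that $K + X - A + YU^{-1}L$ is invertible (its inverse being $XB^{-1}K$, a product of nonsingular matrices) and that $B = K(K+X-A+YU^{-1}L)^{-1}X$.

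Finally, for uniqueness I would argue exactly as at the end of Theorem \ref{ind}: if $A = B_1 - C_1$ is any splitting with $B_1^{-1}C_1 = H$, then $B_1 H = C_1 = B_1 - A$, so $B_1(I-H) = A$ and hence $B_1 = A(I-H)^{-1} = B$. This completes the proposed proof. The main obstacle is purely computational, namely the verification of the displayed algebraic identity; no new convergence or projection arguments are needed beyond what Corollary \ref{convcor} already supplies.
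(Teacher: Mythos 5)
Your proposal is correct and follows essentially the same route as the paper, which states this corollary as the nonsingular specialization of Theorem \ref{ind} without a separate proof; your computations are exactly those of Theorem \ref{ind} with every $A^{\#}A$, $K^{\#}K$, $U^{\#}U$, $X^{\#}X$ replaced by $I$. You also correctly identify why the range and null-space hypotheses of the theorem disappear here: the identity $B^{-1}=X^{-1}(K+X-A+YU^{-1}L)K^{-1}$ forces $K+X-A+YU^{-1}L=XB^{-1}K$ to be invertible automatically.
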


We also remark that this extends jointly Theorem 3.2 and 3.4 of
\cite{benz}. To support Theorem \ref{ind}, we have the following
example.

\begin{example}
Let $A =
  \begin{bmatrix}
    9&-3&6\\
   -3&5&-2\\
   6&-2&4
  \end{bmatrix}
  ,~K =
  \begin{bmatrix}
    9.9&-3.3&6.6\\
    -3.3&5.5&-2.2\\
    6.6&-2.2&4.4
  \end{bmatrix}
  ,~U =
  \begin{bmatrix}
    13.5&-4.5&9\\
   -4.5&7.5&-3\\
    9&-3&6
  \end{bmatrix}
   ,\\
   \\X =
  \begin{bmatrix}
   12.6&-4.2&8.4\\
    -4.2&7&-2.8\\
    8.4&-2.8&5.6
  \end{bmatrix}
$. Now $A^{\#} =
\begin{bmatrix}
     0.0666&0.0577&0.0444\\
     0.0577&0.2500&0.0385\\
     0.0444&0.0385&0.0296
  \end{bmatrix}
  \geq 0
$ ,\\
\begin{equation*}
  K^{\#} =
\begin{bmatrix}
     0.0605&0.0524&0.0403\\
     0.0524&0.2273&0.0350\\
     0.0403&0.0350&0.0269
  \end{bmatrix}
  \geq 0
 ,~
K^{\#}L =
\begin{bmatrix}
     0.0629&0&0.0420\\
     0&0.0909&0\\
     0.0420&0&0.0280
  \end{bmatrix}
  \geq 0
,\\
\end{equation*}

\begin{equation*}
 U^{\#} =
\begin{bmatrix}
     0.0444&0.0385&0.0296\\
     0.0385&0.1667&0.0256\\
     0.0296&0.0256&0.0197
  \end{bmatrix}
  \geq 0
,~ U^{\#}V =
\begin{bmatrix}
     0.2308&0&0.1538\\
     0&0.3333&0\\
     0.1538&0&0.1026
  \end{bmatrix}
  \geq 0
,~
\end{equation*}
 \begin{equation*}
  X^{\#} =
\begin{bmatrix}
     0.0475&0.0412&0.0317\\
     0.0412&0.1786&0.0275\\
     0.0317&0.0275&0.0211
  \end{bmatrix}
  \geq 0
 \mbox{ and } X^{\#}Y =
\begin{bmatrix}
     0.1978&0&0.1319\\
     0&0.2857&0\\
     0.1319&0&0.0879
  \end{bmatrix}
  \geq 0.
 \end{equation*}
Then $A = K-L = U-V = X-Y$ are three proper G-weak regular
splittings of $A$. Also $R(K+X-A+YU^{\#}L) = R(A)$ and
$N(K+X-A+YU^{\#}L) = N(A).$ So,
\begin{equation*}
  B = K(K+X-A+YU^{\#}L)^{\#}X =
  \begin{bmatrix}
    9.0786&-3.0262&6.0524\\
   -3.0262&5.0437&-2.0175\\
    6.0524&-2.0175&4.0349
  \end{bmatrix}
\end{equation*}

   and
   \begin{equation*}
    C = B-A =
  \begin{bmatrix}
    0.0786&-0.0262&0.0524\\
    -0.0262&0.0437&-0.0175\\
    0.0524&-0.0175&0.0349
  \end{bmatrix}.
     \end{equation*}
     Now
 $B^{\#} =
\begin{bmatrix}
     0.0660&0.0572&0.0440\\
     0.0572&0.2478&0.0381\\
     0.0440&0.0381&0.0293
  \end{bmatrix}
  \geq 0
$ and $B^{\#}C =
\begin{bmatrix}
     0.0060&0&0.0040\\
     0&0.0087&0\\
     0.0040&0&0.0027
  \end{bmatrix}
  \geq 0
$.
\end{example}
\noindent Therefore, the splitting $A = B-C$ induced by $H$ is a
proper G-weak regular splitting.

The next result confirms that the proposed alternating iterative
scheme converges faster than (\ref{eq2}) under suitable assumptions.

\begin{theorem}\label{comp}
Suppose $A = K-L = U-V = X-Y$ are three proper G-regular splittings
of a group monotone matrix $A$ with $R(A) = R(K+X-A+YU^{\#}L)$ and
$N(A) = N(K+X-A+YU^{\#}L)$.  Then $\rho(H)\leq
min\{\rho(K^{\#}L),\rho(U^{\#}V),\rho(X^{\#}Y)\}<1$.
\end{theorem}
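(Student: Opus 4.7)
The plan is to reduce Theorem \ref{comp} to three applications of the comparison result Theorem \ref{3.6}. By Theorem \ref{ind} (whose range/null-space hypotheses are exactly the ones assumed), the iteration matrix $H$ induces a unique proper G-weak regular splitting $A=B-C$, so $H=B^{\#}C$ and $\rho(H)=\rho(B^{\#}C)$. Each of the three given splittings $A=K-L$, $A=U-V$, $A=X-Y$ is in particular proper G-weak regular, so Theorem \ref{3.6} will deliver $\rho(B^{\#}C)\le \rho(K^{\#}L)$, $\rho(B^{\#}C)\le \rho(U^{\#}V)$, and $\rho(B^{\#}C)\le \rho(X^{\#}Y)$ provided I can verify the three dominations $B^{\#}\ge K^{\#}$, $B^{\#}\ge U^{\#}$, $B^{\#}\ge X^{\#}$. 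Since $\rho(K^{\#}L),\rho(U^{\#}V),\rho(X^{\#}Y)<1$ by Theorem \ref{3.1}, the minimum in the statement is then automatic.

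So the real work is to produce three expressions for $B^{\#}$ that each display non-negativity modulo one of $K^{\#}$, $U^{\#}$, $X^{\#}$. From equation (\ref{eq6}) I read off the compact form $B^{\#}=X^{\#}YU^{\#}VK^{\#}+X^{\#}YU^{\#}+X^{\#}$; because the splittings are G-regular ($K^{\#},U^{\#},X^{\#},L,V,Y\ge 0$), this immediately gives $B^{\#}-X^{\#}=X^{\#}YU^{\#}VK^{\#}+X^{\#}YU^{\#}\ge 0$. For the domination by $K^{\#}$, I start from equation (\ref{equation177}), namely $B^{\#}=X^{\#}(K+X-A+YU^{\#}L)K^{\#}$, expand, and use the projection identities $X^{\#}X=A^{\#}A$ and $A^{\#}AK^{\#}=K^{\#}$ to collapse $X^{\#}XK^{\#}=K^{\#}$; what remains is $B^{\#}=K^{\#}+X^{\#}LK^{\#}+X^{\#}YU^{\#}LK^{\#}$, which is $\ge K^{\#}$ by term-wise non-negativity.

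The domination $B^{\#}\ge U^{\#}$ is the step I expect to be the main obstacle, because $U^{\#}$ does not appear as a summand in either of the natural forms of $B^{\#}$. The plan is to manufacture it via the identities $X^{\#}XU^{\#}=U^{\#}$ and $X^{\#}UU^{\#}=X^{\#}$, which hold because $R(U^{\#})=R(X^{\#})=R(A)$ and $N(X^{\#})=N(A)$, and because $UU^{\#}-I$ maps into $N(A)$. Using the first identity I get
\begin{equation*}
X^{\#}YU^{\#}=X^{\#}(X-A)U^{\#}=X^{\#}XU^{\#}-X^{\#}AU^{\#}=U^{\#}-X^{\#}AU^{\#},
\end{equation*}
and using the second,
\begin{equation*}
X^{\#}-X^{\#}AU^{\#}=X^{\#}(UU^{\#}-AU^{\#})=X^{\#}(U-A)U^{\#}=X^{\#}VU^{\#}.
\end{equation*}
Substituting into the compact form yields $B^{\#}=U^{\#}+X^{\#}VU^{\#}+X^{\#}YU^{\#}VK^{\#}$, so $B^{\#}-U^{\#}\ge 0$ by non-negativity of $V,Y,K^{\#},U^{\#},X^{\#}$.

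With all three dominations in hand, Theorem \ref{3.6} applied separately to the pairs $(B,K)$, $(B,U)$, $(B,X)$ yields the three inequalities $\rho(H)=\rho(B^{\#}C)\le \rho(K^{\#}L)$, $\rho(B^{\#}C)\le \rho(U^{\#}V)$, $\rho(B^{\#}C)\le \rho(X^{\#}Y)$, and combining them gives $\rho(H)\le\min\{\rho(K^{\#}L),\rho(U^{\#}V),\rho(X^{\#}Y)\}<1$. The only subtlety worth double-checking before writing the final proof is that Theorem \ref{3.6} requires the ``second'' splitting in the pair to be G-regular (not merely G-weak regular); since $K-L$, $U-V$, and $X-Y$ are all assumed G-regular here, this hypothesis is satisfied in each of the three applications.
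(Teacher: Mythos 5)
Your proposal is correct and follows essentially the same route as the paper: invoke Theorem \ref{ind} to obtain the induced proper G-weak regular splitting $A=B-C$, establish the three dominations $B^{\#}\ge X^{\#}$, $B^{\#}\ge U^{\#}$ (via $X^{\#}XU^{\#}=U^{\#}$ and $X^{\#}UU^{\#}=X^{\#}$, exactly as in the paper), and $B^{\#}\ge K^{\#}$ (from $B^{\#}=X^{\#}(K+X-A+YU^{\#}L)K^{\#}$), then apply Theorem \ref{3.6} three times. The algebraic identities and the final assembly match the paper's proof step for step.
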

\begin{proof}
By Theorem \ref{ind}, $A = B-C$ is a proper G-weak regular splitting
induced by $H$, and from (\ref{eq6}),  $$B^{\#} =
X^{\#}(YU^{\#}VK^{\#}+YU^{\#}+I) =
X^{\#}YU^{\#}VK^{\#}+X^{\#}YU^{\#}+X^{\#} \geq X^{\#}.$$ Again,
\begin{eqnarray*}
         B^{\#} &=& X^{\#}YU^{\#}VK^{\#}+X^{\#}YU^{\#}+X^{\#} \\
       &=& X^{\#}YU^{\#}VK^{\#}+X^{\#}XU^{\#}-X^{\#}AU^{\#}+X^{\#}UU^{\#} \\
       &=& X^{\#}YU^{\#}VK^{\#}+U^{\#}+X^{\#}(U-A)U^{\#}\\
       &=& X^{\#}YU^{\#}VK^{\#}+U^{\#}+X^{\#}VU^{\#} \geq U^{\#}.
       \end{eqnarray*}
Also,
\begin{eqnarray*}
         B^{\#} &=& X^{\#}(K+X-A+YU^{\#}L)K^{\#} \\
       &=& X^{\#}KK^{\#}+X^{\#}XK^{\#}-X^{\#}(K-L)K^{\#}+X^{\#}YU^{\#}LK^{\#}\\
       &=& X^{\#}+K^{\#}-X^{\#}KK^{\#}+X^{\#}LK^{\#}+X^{\#}YU^{\#}LK^{\#} \\
       &=& X^{\#}+K^{\#}-X^{\#}+X^{\#}LK^{\#}+X^{\#}YU^{\#}LK^{\#}\\
       &=& K^{\#}+X^{\#}LK^{\#}+X^{\#}YU^{\#}LK^{\#} \geq K^{\#}. \\
     \end{eqnarray*}
Applying Theorem \ref{3.6} to the pair of the splittings $A = B-C$
and $A = K-L$,
 $A = B-C$ and $A = U-V$, and $A = B-C$ and $A = X-Y$, we have $\rho(H)\leq \rho(K^{\#}L)<1$,
$\rho(H)\leq \rho(U^{\#}V)<1$ and $\rho(H)\leq \rho(X^{\#}Y)<1,$
respectively. Therefore, $\rho(H)\leq
min\{\rho(K^{\#}L),\rho(U^{\#}V),\rho(X^{\#}Y)\}<1$.
\end{proof}

The result below is the case when $A$ is nonsingular.

\begin{cor}\label{icomp}
Let $A = K-L = U-V = X-Y$ be three regular splittings of a monotone
matrix $A$. Then $\rho(H)\leq
min\{\rho(K^{-1}L),\rho(U^{-1}V),\rho(X^{-1}Y)\}<1$.
\end{cor}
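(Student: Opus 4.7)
The plan is to derive Corollary \ref{icomp} as a direct specialization of Theorem \ref{comp} to the nonsingular setting. When $A$ is nonsingular and monotone (so $A^{-1}$ exists and $A^{-1} \geq 0$), we have $A^\# = A^{-1}$, and a regular splitting $A = K - L$ (meaning $K$ is nonsingular with $K^{-1} \geq 0$ and $L \geq 0$) is automatically a proper G-regular splitting in the sense of the theorem: the range conditions $R(K) = \R^n = R(A)$ and $N(K) = \{0\} = N(A)$ hold trivially, and $K^\# = K^{-1}$. The same remarks apply to the splittings through $U$ and $X$.

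First, I would observe that the only hypotheses of Theorem \ref{comp} not visible in the statement of the corollary are $R(A) = R(K + X - A + YU^{-1}L)$ and $N(A) = N(K + X - A + YU^{-1}L)$. Because $R(A) = \R^n$ and $N(A) = \{0\}$, these both collapse to the single requirement that $K + X - A + YU^{-1}L$ be nonsingular. To verify this, I would invoke the construction from the proof of Theorem \ref{ind}: set $B = A(I - H)^{-1}$, which is well-defined because $\rho(H) < 1$ by Corollary \ref{convcor} (every regular splitting is, in particular, a weak regular splitting, and a nonsingular monotone matrix is group monotone). Then $B$ is nonsingular, and the nonsingular analogue of the identity (\ref{equation177}), namely
$$B^{-1} = X^{-1}(K + X - A + YU^{-1}L)K^{-1},$$
together with the nonsingularity of $X$ and $K$, forces $K + X - A + YU^{-1}L = X B^{-1} K$ to be nonsingular as well.

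With all hypotheses of Theorem \ref{comp} in place, the conclusion $\rho(H) \leq \min\{\rho(K^{-1}L), \rho(U^{-1}V), \rho(X^{-1}Y)\} < 1$ follows immediately by replacing each group inverse with the corresponding ordinary inverse. The only non-routine step is the nonsingularity verification of $K + X - A + YU^{-1}L$, and even that reduces to quoting the already-established $\rho(H) < 1$; once this is in hand, the corollary is a clean substitution into the theorem rather than a new argument.
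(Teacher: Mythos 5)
Your proposal is correct and follows the same route the paper intends: Corollary \ref{icomp} is stated there without proof as the nonsingular specialization of Theorem \ref{comp}, exactly as you argue. Your explicit check that the extra hypotheses of Theorem \ref{comp} collapse to the nonsingularity of $K+X-A+YU^{-1}L$, obtained from $K+X-A+YU^{-1}L = X(I-H)A^{-1}K$ with $\rho(H)<1$ supplied by Corollary \ref{convcor}, is a point the paper leaves tacit, and it is worth making.
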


Again, we have the following corollary when two splitting are
considered, and is proved in \cite{benz}.

\begin{cor}(Theorem 4.1, \cite{benz})
Let $A = U-V = X-Y$ be two regular splittings of a monotone matrix
$A$. Then $\rho(H)\leq min\{\rho(U^{-1}V),\rho(X^{-1}Y)\}<1$.
\end{cor}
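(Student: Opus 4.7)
The plan is to adapt the argument from Theorem \ref{comp} and its predecessor Theorem \ref{ind} to the two-splitting setting. The iteration matrix in the two-step scheme is $H = X^{-1}YU^{-1}V$, and the natural induced splitting candidate is $A = B - C$ with $B^{-1} := X^{-1}(YU^{-1} + I)$. Writing $Y = X - A$, one rewrites $B^{-1} = X^{-1}(X + U - A)U^{-1}$, so $B = U(X + U - A)^{-1}X$ when $X + U - A$ is invertible. I would first verify the identity $B^{-1}C = H$ with $C := B - A$. The cleanest route is to compute $I - B^{-1}A = I - X^{-1}A - X^{-1}YU^{-1}A$ and substitute $A = X - Y$ and $A = U - V$ in the two occurrences; the cross-cancellation leaves exactly $X^{-1}YU^{-1}V = H$.

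Once $B^{-1}C = H$ is in hand, I would establish the two comparison inequalities $B^{-1} \geq X^{-1}$ and $B^{-1} \geq U^{-1}$. The first is immediate from $B^{-1} = X^{-1} + X^{-1}YU^{-1}$, since each term on the right is nonnegative under the regular splitting hypotheses. For the second, I would rewrite $X^{-1}YU^{-1} = X^{-1}(X - A)U^{-1} = U^{-1} - X^{-1}AU^{-1}$ and then $X^{-1}AU^{-1} = X^{-1}(U - V)U^{-1} = X^{-1} - X^{-1}VU^{-1}$, giving $B^{-1} = U^{-1} + X^{-1}VU^{-1} \geq U^{-1}$. These are precisely the two-splitting analogs of the chain $B^{\#} \geq K^{\#},\; B^{\#} \geq U^{\#},\; B^{\#} \geq X^{\#}$ that drives the three-splitting proof of Theorem \ref{comp}.

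To close out, I would verify that $A = B - C$ is itself a weak regular splitting: $B^{-1} = X^{-1} + X^{-1}YU^{-1} \geq 0$, and $B^{-1}C = H = X^{-1}YU^{-1}V \geq 0$ as a product of nonnegative matrices. I would then invoke the nonsingular Elsener-type comparison stated as the corollary right after Theorem \ref{3.6}, applied first to the pair $(A = B - C,\, A = U - V)$ and then to $(A = B - C,\, A = X - Y)$, to obtain $\rho(H) \leq \rho(U^{-1}V) < 1$ and $\rho(H) \leq \rho(X^{-1}Y) < 1$, whence $\rho(H) \leq \min\{\rho(U^{-1}V),\, \rho(X^{-1}Y)\} < 1$.

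The lone mild obstacle is the unconditional construction of $B$: invertibility of $X + U - A$ is not transparent from the hypotheses. This can be sidestepped in the spirit of the end of the proof of Theorem \ref{ind} by defining $B := A(I - H)^{-1}$ directly. The Neumann series argument from Theorem \ref{conv}, carried out verbatim with the two-factor product replacing the three-factor one, supplies $\rho(H) < 1$ up front so that $I - H$ is invertible by Theorem \ref{2.3}; the identity $B(I - H) = A$ then recovers the closed form $B^{-1} = X^{-1}(YU^{-1} + I)$, and the rest of the argument proceeds as above.
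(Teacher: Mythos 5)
Your proposal is correct and follows essentially the same route as the paper: the paper states this corollary without a separate proof, as the two-splitting nonsingular specialization of the machinery in Theorems \ref{conv}, \ref{ind} and \ref{comp} (induced splitting $B^{-1}=X^{-1}(YU^{-1}+I)$, the comparisons $B^{-1}\geq U^{-1}$ and $B^{-1}\geq X^{-1}$, then the Elsner-type comparison result), and that is precisely what you reconstruct. Your closing remark about defining $B=A(I-H)^{-1}$ after first securing $\rho(H)<1$ also matches how the paper handles the analogous invertibility issue in Theorem \ref{ind}.
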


The converse of  Theorem \ref{comp} does not hold. The next example
justifies the claim.

\begin{example}
Let $A =
  \begin{bmatrix}
   -1&0&-3\\
    0&1&2\\
    0&2&4
  \end{bmatrix}
  ,~K =
  \begin{bmatrix}
    -2&0&-6\\
    0&1&2\\
    0&2&4
  \end{bmatrix}
  ,~U =
  \begin{bmatrix}
    -3&0&-9\\
    0&1&2\\
    0&2&4
  \end{bmatrix}
   ,\\ X =
  \begin{bmatrix}
   -4&0&-12\\
    0&1&2\\
    0&2&4
  \end{bmatrix}
$. Then $A = K-L = U-V = X-Y$ are three proper splittings with
$\rho(H) = \rho(K^{\#}LU^{\#}VX^{\#}Y) = 0.25<1.$ But $A = K-L = U-V
= X-Y$ are not proper G-regular splittings as $K^{\#} =
\begin{bmatrix}
    -0.5000&0.3600&-0.7800\\
     0&0.0400&0.0800\\
     0&0.0800&0.1600
  \end{bmatrix}
  \not\geq 0
$, $L =
\begin{bmatrix}
     -1&0&-3\\
      0&0&0\\
      0&0&0
  \end{bmatrix}
\not\geq {0} $, $U^{\#} =
\begin{bmatrix}
    -0.3333&0.1600&-0.6800\\
     0&0.0400&0.0800\\
     0&0.0800&0.1600
  \end{bmatrix}
  \not\geq 0
$, $V =
\begin{bmatrix}
     -2&0&-6\\
      0&0&0\\
      0&0&0
  \end{bmatrix}
\not\geq {0} $, $X^{\#} =
\begin{bmatrix}
    -0.2500&0.0600&-0.6300\\
     0&0.0400&0.0800\\
     0&0.0800&0.1600
  \end{bmatrix}
  \not\geq 0
$ and $Y =
\begin{bmatrix}
    -3&0&-9\\
      0&0&0\\
      0&0&0
  \end{bmatrix}
\not\geq {0}$.
\end{example}

The next example shows that the condition of G-regular cannot be
dropped.
\begin{example}
Let \begin{eqnarray*}
 A =
  \begin{bmatrix}
    25&-6&1\\
    -7&4&0\\
    4&6&1
  \end{bmatrix}
  , ~A^{\#}=
  \begin{bmatrix}
    0.0428&0.0200&0.0054\\
    0.0539&0.2218&0.0305\\
    0.2044&0.6854&0.0968
  \end{bmatrix}
 \geq 0,
 \end{eqnarray*}

  \begin{eqnarray*}
  K =
  \begin{bmatrix}
    -8.75&30.5&3.0776\\
     8.25&-15.5&-1.3017\\
     16&-16&-0.8276
  \end{bmatrix}
   ,~U =
  \begin{bmatrix}
    -17.75&43&3.9655\\
    14.25&-19&-1.3103\\
    25&-14&0.0345
  \end{bmatrix}
  ,
  \end{eqnarray*}
  \begin{eqnarray*}
    X =
  \begin{bmatrix}
    -58.5&64.75&3.7802\\
     24.5&-11.75&0.2716\\
     15&29.5&4.5948
  \end{bmatrix}
    . ~\mbox{Now}~X^{\#} =
\begin{bmatrix}
     0.0911&0.1619&0.0258\\
     0.0370&0.0195&0.0049\\
     0.2020&0.2203&0.0405
  \end{bmatrix},
    \end{eqnarray*}
    \begin{eqnarray*}
 K^{\#} =
\begin{bmatrix}
     0.0436&0.0956&0.0145\\
     0.0283&0.0214&0.0045\\
     0.1285&0.1597&0.0281
  \end{bmatrix}
  \mbox{and}~U^{\#} =
\begin{bmatrix}
     0.0031&0.0430&0.0054\\
     0.0147&0.0290&0.0045\\
     0.0473&0.1299&0.0189
  \end{bmatrix}.
  \end{eqnarray*}
  Then $A = K-L = U-V = X-Y$ are three proper splittings with $R(A) = R(K+X-A+YU^{\#}L)$ and $N(A) = N(K+X-A+YU^{\#}L)$. But $A = K-L = U-V = X-Y$ are not G-regular splittings as
 \begin{eqnarray*}
 L =
\begin{bmatrix}
     -33.75&36.5&2.0776\\
      15.25&-19.5&-1.3017\\
      12&-22&-1.8276
  \end{bmatrix}
\not\geq {0} ,~ V =
\begin{bmatrix}
     -42.75&49&2.9655\\
      21.25&-23&-1.3103\\
      21&-20&-0.9655
  \end{bmatrix}
\not\geq {0}
\end{eqnarray*}
and
\begin{eqnarray*}
Y =
\begin{bmatrix}
     -83.5&70.75&2.7802\\
      31.5&-15.75&0.2716\\
      11&23.5&3.5948
  \end{bmatrix}
\not\geq {0} .\mbox{~Therefore,~}
\end{eqnarray*}
$$\rho(H) = 1.7746\not\leq \min\{\rho(K^{\#}L) = 1.2987, \rho(U^{\#}V) = 1.2530, \rho(X^{\#}Y) = 1.2975\}\not\leq 1.$$
\end{example}

Theorem \ref{conv} shows that the assumption of  group monotonicity
of $A$ guarantees the convergence of the three-step alternating
iteration scheme. If we drop this assumption,  then the proposed
theory may fail. To overcome this, the concept of a preconditioned
matrix is introduced next. In such a case, we consider the following
system \begin{equation}\label{peqn} QAx=Qb
\end{equation}
 where  $Q$ is a nonsingular matrix called {\it preconditioned matrix}. Milaszewicz \cite{mil}, used the iteration matrix $T$ which is irreducible and non-negative to improve the convergence rate of the Gauss-Seidel and the Jacobi method. Gunawardena {\it et al.} \cite{gun} proposed the preconditioned matrix $P_c = I+S$, (where $S$ is the matrix shown in remark 3.3 \cite{gun}). Kohno {\it et al.} \cite{koh} and Kotakemori {\it et al.} \cite{kot} extended the upper triangular approach by considering a parametric preconditioned matrix $P_c = I+S(\alpha)$ to obtain faster convergence in the iterative schemes which used for solving consistent linear systems. In  case of a singular linear system, we discuss the system (\ref{peqn}) converges to  $A^{\#}b$  under suitable choice of $Q$. The iterative scheme of the modified system (\ref{peqn}) is defined by,\\
\begin{equation}\label{precon}
x^{k+1} = K_{q}^{\#}L_{q}x^{k}+K_{q}^{\#}Qb,
\end{equation}
where $QA = K_{q}-L_{q}$ be a proper splitting of the matrix
$QA\in\mathbb{R}^{n\times{n}}$, will converge to $A^{\#}b$ for any
initial guess $x^{0}$ if and only if $\rho(K_q^{\#}L_q)<1$.

  Next, we discuss the existence of preconditioned matrix for some particular cases as well as the convergence of the iterative scheme for proper G-weak regular splittings.
\begin{lemma}\label{4.4}
If there exists a nonsingular matrix $Q \in\mathbb{R}^{n\times{n}}$
such that $QA = AQ,$ then $(QA)^{\#} = A^{\#}Q^{-1} = Q^{-1}A^{\#}.$
\end{lemma}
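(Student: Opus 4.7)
The plan is to verify the three defining equations of the group inverse directly for the candidate $X=A^{\#}Q^{-1}$ (and symmetrically for $Q^{-1}A^{\#}$), and then conclude by uniqueness. The key preparatory step, however, is to establish that $Q$ commutes not only with $A$ but also with $A^{\#}$; without this, the commutativity axiom $(QA)X=X(QA)$ does not reduce cleanly.

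For this preparatory step I would define $Z:=Q^{-1}A^{\#}Q$ and check that $Z$ satisfies the three defining conditions of the group inverse of $A$. Using $QA=AQ$, hence $Q^{-1}A=AQ^{-1}$, one gets $AZA=(AQ^{-1})A^{\#}(QA)=Q^{-1}(AA^{\#}A)Q=Q^{-1}AQ=A$; the verifications $ZAZ=Z$ and $AZ=Q^{-1}(AA^{\#})Q=Q^{-1}(A^{\#}A)Q=ZA$ are analogous. Since $A$ has index one, its group inverse is unique, so $Z=A^{\#}$, yielding $QA^{\#}=A^{\#}Q$. In particular $A^{\#}Q^{-1}=Q^{-1}A^{\#}$.

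With commutativity of $Q$ and $A^{\#}$ in hand, I would set $X=A^{\#}Q^{-1}$ and verify the three group-inverse axioms for $QA$: the inner $Q^{-1}Q$ or $QQ^{-1}$ always cancels and one is left with one of the known identities $AA^{\#}A=A$, $A^{\#}AA^{\#}=A^{\#}$, $AA^{\#}=A^{\#}A$. Concretely, $(QA)X(QA)=Q(AA^{\#}A)=QA$, $X(QA)X=(A^{\#}AA^{\#})Q^{-1}=A^{\#}Q^{-1}=X$, and $(QA)X=Q(AA^{\#})Q^{-1}=AA^{\#}=A^{\#}A=X(QA)$, where the middle equality in the last chain uses the established commutativity of $Q$ with both $A$ and $A^{\#}$.

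The main obstacle is really just the commutativity of $Q$ with $A^{\#}$, which is why I would invoke the uniqueness-of-group-inverse argument before touching $(QA)^{\#}$ at all; everything after that is bookkeeping, and the uniqueness of the group inverse of $QA$ (whose existence follows automatically from the successful verification of the three axioms by the candidate $X$) closes the argument and gives both representations $(QA)^{\#}=A^{\#}Q^{-1}=Q^{-1}A^{\#}$.
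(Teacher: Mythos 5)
Your proposal is correct and follows essentially the same route as the paper: both first establish $QA^{\#}=A^{\#}Q$ by showing $Q^{-1}A^{\#}Q$ satisfies the three group-inverse equations for $A$ and invoking uniqueness, then verify the three axioms for $QA$ with the candidate $A^{\#}Q^{-1}$. The only difference is that you write out the verification of $Z=Q^{-1}A^{\#}Q$ explicitly, where the paper merely asserts it "can be easily verified."
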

\begin{proof}
The assumption $QA = AQ$ yields $A = Q^{-1}AQ.$ Now $A^{\#} =
Q^{-1}A^{\#}Q$ is the group inverse of $A$ which can be easily
verified by the definition of group inverse. Pre-multiplying $Q$ in
$A^{\#} = Q^{-1}A^{\#}Q$ we obtain $QA^{\#} = A^{\#}Q$. Let $B = QA$
and $X = A^{\#}Q^{-1}$. By the definition of the group inverse: $BXB
= QAA^{\#}Q^{-1}QA = B,$ $XBX = A^{\#}Q^{-1}QAA^{\#}Q^{-1} = X$ and
$BX = QAA^{\#}Q^{-1} = AQA^{\#}Q^{-1} = AA^{\#}QQ^{-1} = A^{\#}A =
A^{\#}Q^{-1}QA = XB$. Now post-multiplying $Q^{-1}$ in $A^{\#} =
Q^{-1}A^{\#}Q$ we have $A^{\#}Q^{-1} = Q^{-1}A^{\#}.$
\end{proof}
\rem For  $A^{\#}\leq 0,$ if we choose $Q=-cI,(c>0)$ and for
$A^{\#}\geq 0,$ if we choose $Q=cI,(c>0)$  then Lemma \ref{4.4}
along with $(QA)^{\#}\geq 0$ holds.

\begin{lemma}\label{pconv}
Let $A^{\#}\ngeq 0.$ If there exists a nonsingular matrix
$Q\in\mathbb{R}^{n\times n}$ such that $QA = AQ,$  $A^{\#}Q^{-1}\geq
0,$ and $QA=K_q-L_q$ is a proper G-weak regular splittings of  $QA$,
then the iterative scheme (\ref{precon}) converges to $A^{\#}b.$
\end{lemma}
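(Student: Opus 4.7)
The plan is to reduce the statement to a direct application of Theorem \ref{3.2} via Lemma \ref{4.4}, and then verify that the limit of the iteration is indeed $A^{\#}b$. The convergence criterion discussed just before the lemma says that the scheme (\ref{precon}) converges to $A^{\#}b$ for any initial guess iff $\rho(K_q^{\#}L_q)<1$, so the real content is to prove that the spectral radius of the iteration matrix is strictly less than one even though $A^{\#}\ngeq 0$.

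First I would apply Lemma \ref{4.4} using the hypothesis $QA=AQ$ to conclude that $(QA)^{\#}=A^{\#}Q^{-1}=Q^{-1}A^{\#}$. Combined with the hypothesis $A^{\#}Q^{-1}\ge 0$, this immediately gives $(QA)^{\#}\ge 0$, that is, $QA$ is a group monotone matrix. Since $QA=K_q-L_q$ is by assumption a proper G-weak regular splitting, Theorem \ref{3.2} applied to $QA$ yields $\rho(K_q^{\#}L_q)<1$, which is the desired convergence condition.

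Once $\rho(K_q^{\#}L_q)<1$ is established, the iteration (\ref{precon}) converges to the unique fixed point $x^{*}=(I-K_q^{\#}L_q)^{-1}K_q^{\#}Qb$. To identify this fixed point with $A^{\#}b$, I would invoke Theorem \ref{2.5}(e) for the splitting $QA=K_q-L_q$, which gives $(QA)^{\#}=(I-K_q^{\#}L_q)^{-1}K_q^{\#}$, so that $x^{*}=(QA)^{\#}Qb$. Substituting $(QA)^{\#}=Q^{-1}A^{\#}$ from Lemma \ref{4.4} and then using the commutation $A^{\#}Q=QA^{\#}$ (which is part of the proof of Lemma \ref{4.4}) produces $x^{*}=Q^{-1}A^{\#}Qb=Q^{-1}QA^{\#}b=A^{\#}b$.

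The only thing that requires any care is the last identification step, because one has to make sure the commuting relations between $Q$ and $A^{\#}$ are used in the right direction; everything else is an immediate consequence of results already in the paper. In particular, no irreducibility or positivity argument is needed here since Theorem \ref{3.2} already encodes the group-monotonicity characterization that we have arranged to hold for $QA$.
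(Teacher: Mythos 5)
Your proposal is correct and follows essentially the same route as the paper: invoke Lemma \ref{4.4} to get $(QA)^{\#}=A^{\#}Q^{-1}\geq 0$, apply Theorem \ref{3.2} to the proper G-weak regular splitting of the group monotone matrix $QA$ to obtain $\rho(K_q^{\#}L_q)<1$, and identify the limit $(QA)^{\#}Qb=A^{\#}Q^{-1}Qb=A^{\#}b$. The only difference is cosmetic: the paper cancels $Q^{-1}Q$ directly from the form $A^{\#}Q^{-1}$, whereas you route through $Q^{-1}A^{\#}$ and the commutation $A^{\#}Q=QA^{\#}$; both are valid.
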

\begin{proof}
Since $QA = AQ$ and $A^{\#}Q^{-1}\geq 0$, we obtain
$(QA)^{\#}=A^{\#}Q^{-1}\geq 0$. As $QA=K_q-L_q$ is a proper G-weak
regular splittings of a group monotone matrix  $QA.$ Then, the
iterative scheme (\ref{precon}) converges to
$(QA)^{\#}Qb=(QA)^{\#}Qb=A^{\#}Q^{-1}Qb=A^{\#}b$, by Theorem
\ref{3.2}.
\end{proof}
\rem If $A^\#$ is neither non-positive nor non-negative, i.e., some
elements of  $A^\#$ are positive and some are negative, then the
construction of such $Q$ seems to be an open problem.

\rem  Under the same assumptions as in Lemma \ref{pconv}, and if
$QA=K_q-L_q=U_q-V_q=X_q-Y_q$ are three proper G-weak regular
splittings of $QA.$ Then, by Theorem \ref{conv} the alternating
iterative scheme generated from the splittings of $QA$, i.e.,
\begin{equation}\label{palt}
x^{k+1} =
X_q^{\#}Y_qU_q^{\#}V_qK_q^{\#}L_qx^{k}+X_q^{\#}(Y_qU_q^{\#}V_qK_q^{\#}+Y_qU_q^{\#}+I)b
\end{equation}
will converge to $A^{\#}b$, for any initial value $x^0.$ The
numerical implementation of the iterative scheme (\ref{palt}) and
its comparison are discussed in the next section.

The next result shows that the preconditioned approach is also more
preferable even for group monotone matrices.

\begin{theorem}\label{pcomp}
Let $A = K-L$ be a proper G-weak regular splitting of a group
monotone matrix $A$. Assume that there exists a nonsingular matrix
$Q$ such that $QA = AQ$ and $A^{\#}Q^{-1}\geq 0.$ If $QA = K_q-L_q$
is a proper G-regular splitting of $QA$ and $QK_q^{\#}\geq K^{\#},$
then $\rho(K_q^{\#}L_q)\leq \rho(K^{\#}L)<1.$
\end{theorem}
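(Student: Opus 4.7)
The plan is to derive a clean matrix inequality by combining the standard splitting identities with the commutativity of $Q$ and $A^{\#}$ supplied by Lemma \ref{4.4}, and then finish with a Perron--Frobenius comparison via Theorem \ref{2.4}(i).

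First, I would note that both iteration matrices already have spectral radius less than one: $\rho(K^{\#}L) < 1$ follows from Theorem \ref{3.2} applied to the G-weak regular splitting $A = K - L$ of the group monotone $A$, and $\rho(K_q^{\#}L_q) < 1$ follows from Lemma \ref{pconv}, since $(QA)^{\#} = A^{\#}Q^{-1} \geq 0$ makes $QA$ group monotone and the G-regular splitting $QA = K_q - L_q$ is, in particular, G-weak regular. So it remains to establish the comparison $\rho(K_q^{\#}L_q) \leq \rho(K^{\#}L)$.

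Next, I would apply Theorem \ref{2.5}(e) to each splitting to obtain the identities
\begin{equation*}
A^{\#} - K^{\#} = A^{\#}LK^{\#} = K^{\#}LA^{\#}, \qquad (QA)^{\#} - K_q^{\#} = (QA)^{\#}L_qK_q^{\#}.
\end{equation*}
Substituting $(QA)^{\#} = A^{\#}Q^{-1}$ from Lemma \ref{4.4} in the second identity and then left-multiplying by $Q$, the relation $QA^{\#} = A^{\#}Q$ lets the $Q$ slide past $A^{\#}$ and cancel with $Q^{-1}$ on both sides, yielding the clean formula
\begin{equation*}
A^{\#} - QK_q^{\#} = A^{\#}L_qK_q^{\#}.
\end{equation*}
The hypothesis $QK_q^{\#} \geq K^{\#}$ then gives
\begin{equation*}
A^{\#}L_qK_q^{\#} = A^{\#} - QK_q^{\#} \leq A^{\#} - K^{\#} = K^{\#}LA^{\#}.
\end{equation*}

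Finally, the G-regular assumption on $QA = K_q - L_q$ forces $L_qK_q^{\#} \geq 0$, so by Theorem \ref{2.1} there is $y \geq 0$ with $L_qK_q^{\#}y = \rho(K_q^{\#}L_q)y$ (using $\rho(L_qK_q^{\#}) = \rho(K_q^{\#}L_q)$). The case $\rho(K_q^{\#}L_q) = 0$ is trivial; otherwise $y \in R(L_qK_q^{\#}) \subseteq R(L_q) \subseteq R(K_q) = R(QA) = R(A)$, the last equality following because $QA = AQ$ makes $R(A)$ invariant under $Q$. Post-multiplying the inequality above by $y$ and setting $z := A^{\#}y$ gives
\begin{equation*}
\rho(K_q^{\#}L_q)\,z \leq K^{\#}L\,z,
\end{equation*}
with $z \geq 0$ because $A^{\#} \geq 0$. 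Since $A^{\#}$ is injective on $R(A)$ (it is the inverse of $A$ there in the group sense), $y \in R(A)\setminus\{0\}$ forces $z \neq 0$, and Theorem \ref{2.4}(i) yields $\rho(K_q^{\#}L_q) \leq \rho(K^{\#}L) < 1$.

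The main obstacle is really the ``orientation'' of the hypothesis $QK_q^{\#} \geq K^{\#}$: a direct application of Theorem \ref{2.5}(e) produces identities with $K_q^{\#}$ on the right, and one has to choose the correct form of the Theorem \ref{2.5}(e) factorization, together with the Lemma \ref{4.4} commutation, so that the $Q$ lands on the correct side of $K_q^{\#}$ to exploit the hypothesis. Once that alignment is achieved, the Perron--Frobenius step using Theorem \ref{2.4}(i) is routine.
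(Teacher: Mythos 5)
Your proposal is correct and follows essentially the same route as the paper: both establish the key inequality $A^{\#}L_qK_q^{\#}\leq K^{\#}LA^{\#}$ from Theorem \ref{2.5}(e), the commutation $QA^{\#}=A^{\#}Q$ of Lemma \ref{4.4}, and the hypothesis $QK_q^{\#}\geq K^{\#}$, then finish with the Perron eigenvector of $L_qK_q^{\#}$, the substitution $z=A^{\#}y$, and Theorem \ref{2.4}(i). Your explicit handling of the $\rho(K_q^{\#}L_q)=0$ case is a minor point of extra care the paper omits, but the argument is otherwise the same.
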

\begin{proof}
By Theorem \ref{3.2} and Theorem \ref{3.1}, we have
$\rho(K_q^{\#}L_q)<1$ and $\rho(K^{\#}L)<1$, respectively. Since
$L_qK_q^{\#}\geq 0$, there exists an eigenvector $x\geq 0$ such that
\begin{equation}\label{eq14}
L_qK_q^{\#}x = \rho(K_q^{\#}L_q)x,
\end{equation}
by Theorem \ref{2.1}. This implies $x\in R(L_qK_q^{\#}) \subseteq
R(A)$. Since $N(L_q)\supseteq N(K_q),$ so $QA = (I-L_qK_q^{\#})K_q.$
Therefore,  $A^{\#} = QK_q^{\#}(I-L_qK_q^{\#})^{-1}$ by Theorem
\ref{2.5} (e). Now $QK_q^{\#}\geq K^{\#}$  implies
$A^{\#}(I-L_qK_q^{\#})\geq (I-K^{\#}L)A^{\#}$. Further
simplification yields
\begin{equation}\label{eq15}
A^{\#}L_qK_q^{\#}\leq K^{\#}LA^{\#}.
\end{equation}
Post-multiplying $x$ in (\ref{eq15}) and using equation
(\ref{eq14}), we have $\rho(K_q^{\#}L_q)A^{\#}x \leq
K^{\#}LA^{\#}x$. Let us assume $z = A^{\#}x$. We obtain $z\geq 0$
and $z\neq 0$ ($z = 0$ leads to $x\in R(A)\cap N(A)$ which is not
possible). Hence, by Theorem \ref{2.4} (i) the required result
follows.
\end{proof}

\section{Numerical Examples}
In this section, we discuss a few examples and its numerical
implementation for the proposed theory in the previous section. The
performance measures calculated are the number of iterations (IT),
the mean processing time in seconds (MT) and the estimation of error
bounds. All the numerical examples are worked out by using
Mathematica 10.0 (for examples 5.1-5.5) and MATLAB R2017a (for
examples 5.6-5.7) on an Intel(R) Core(TM)i5, 2.5GHz, 4GBRAM, which
runs on the operating system: Mac OS X El Capitan Version 10.11.6.
We use the following stopping criterion to terminate the process:
The iteration is terminated if $\|x_k-x_{k-1}\|_2 \leq \epsilon$ or
it reaches to the maximum allowed iterations $2000$.
\begin{example}\label{ex5.1}
Let us consider a linear system $Ax=b$ with $A =
  \begin{bmatrix}
   3&1&2\\
    1&-12&13\\
    2&13&-11
  \end{bmatrix}$ and $b=(1,1,0)^{t}.$
Then $A^{\#} =
\begin{bmatrix}
    0.1471&0.0691&0.0781\\
    0.0691&0.0120&0.0571\\
    0.0781&0.0571&0.0210
  \end{bmatrix}
\geq 0 $. Consider the following three proper G-weak regular
splittings of $A$ as
\begin{eqnarray*}
A &=&
\begin{bmatrix}
    4.75&2.5&2.25\\
    1.5833&-11.5&13.0833\\
    3.1667&14&-10.8333
\end{bmatrix}
  -\begin{bmatrix}
    1.75&1.5&0.25\\
    0.5833&0.5&0.0833\\
    1.1667&1&0.1667
    \end{bmatrix}=K-L= ~\mbox{ (Splitting 1)}\\
  &=&
  \begin{bmatrix}
   5&2&3\\
    2&-12&14\\
    3&14&-11
  \end{bmatrix}
  -\begin{bmatrix}
   2&1&1\\
    1&0&1\\
    1&1&0
  \end{bmatrix}=U-V=~\mbox{ (Splitting 2)}\\
  &=&\begin{bmatrix}
    5.2083&2.9583&2.25\\
    2.25&-10.8333&13.0833\\
    2.9583&13.7917&-10.8333
     \end{bmatrix}
  -\begin{bmatrix}
    2.2083&1.9583&0.25\\
    1.25&1.1667&0.0833\\
    0.9583&0.7917&0.1667
   \end{bmatrix}=X-Y=~\mbox{ (Splitting 3)}
\end{eqnarray*}

\begin{equation*}
 \mbox{  with~ }
K^{\#} =
\begin{bmatrix}
    0.0937&0.0476&0.0462\\
    0.0424&0.0013&0.0411\\
    0.0514&0.0463&0.0051
  \end{bmatrix}
\geq 0,~ K^{\#}L =
\begin{bmatrix}
    0.2456&0.2105&0.0351\\
    0.1228&0.1053&0.0175\\
    0.1228&0.1053&0.0175
  \end{bmatrix}
\geq 0,
\end{equation*}

\begin{equation*}
  U^{\#} =
\begin{bmatrix}
    0.0885&0.0417&0.0469\\
    0.0417&0&0.0417\\
    0.0469&0.0417&0.0052
  \end{bmatrix}
\geq 0,~ U^{\#}V =
\begin{bmatrix}
    0.2656&0.1354&0.1302\\
    0.1250&0.0833&0.0417\\
    0.1406&0.0521&0.0885
  \end{bmatrix}
\geq 0,
\end{equation*}

\begin{equation*}
  X^{\#} =
\begin{bmatrix}
    0.0855&0.0446&0.0409\\
    0.0409&0.0007&0.0401\\
    0.0446&0.0439&0.0007
  \end{bmatrix}
\geq 0~ \mbox{and}~ X^{\#}Y =
\begin{bmatrix}
    0.2838&0.2519&0.0319\\
    0.1297&0.1127&0.0170\\
    0.1541&0.1392&0.0149
  \end{bmatrix}
\geq 0.
\end{equation*}
Therefore, $\rho(H)=0.0614\leq $
$\min\{\rho(K^{\#}L)=0.3684,\rho(U^{\#}V)=0.3983,\rho(X^{\#}Y)=0.4163\}<1$.
The numerical results for the convergence analysis is provided in
Table  and comparison results discussed in Table .
\end{example}

The next example shows the importance of the study of the
alternating iteration scheme in the group inverse setting. Note that
existing theory in the literature uses the non-negativity of the
Moore-Penrose inverse, see \cite{mc, mish, misalt} which fails here.

\begin{example}\label{ex5.2}
Let us consider another system $Ax = b$ with $A =
  \begin{bmatrix}
    10&-4&17\\
    54&-42&77\\
   -12&15&-13
  \end{bmatrix}$
   and $b=(-1,-11,4)^{t}.$ The matrix $A$ has non-negative group inverse but does not have non-negative Moore-Penrose inverse. Since
 \begin{equation*}
  A^{\dag} =
  \begin{bmatrix}
   -0.0028&0.0043&-0.0064\\
    0.0577&0.0033&0.0849\\
    0.0363&0.0109&0.0489
  \end{bmatrix}
  \not\geq 0 ~\mbox{but}~
  A^{\#} =
  \begin{bmatrix}
    0.0242&0.0113&0.0565\\
    0.0548&0.0102&0.1164\\
    0.0090&0.0119&0.0265
  \end{bmatrix}
  \geq 0.
  \end{equation*}
Now consider $A$ as $A=K-L=U-V=X-Y,$ where
  $K=\begin{bmatrix}
    14.8681&-0.1590&29.4750\\
    73.5383&-42.9540&115.1930\\
   -14.4671&21.2385&-13.3840
  \end{bmatrix},$
  \begin{equation*}
  U =
  \begin{bmatrix}
    16.1942&-0.9500&31.5405\\
    76.0650&-35.7555&125.4440\\
   -13.7411&16.4528&-15.4113
  \end{bmatrix},~
   X =
  \begin{bmatrix}
    16.9186&-2.1315&32.1250\\
    76.7119&-39.0705&124.3265\\
   -12.9780&16.3380&-13.9758
  \end{bmatrix}.
  \end{equation*}
  Clearly,
  \begin{equation*}
  K^{\#} =
  \begin{bmatrix}
     0.0098&0.0049&0.0230\\
     0.0269&0.0012&0.0544\\
     0.0012&0.0068&0.0073
  \end{bmatrix}\geq 0,~
  K^{\#}L =
  \begin{bmatrix}
     0.0874&0.1763&0.3018\\
     0.0197&0.4414&0.3595\\
     0.1212&0.0438&0.2729
  \end{bmatrix}  \geq 0,
  \end{equation*}

\begin{equation*}
 U^{\#} =
  \begin{bmatrix}
     0.0122&0.0047&0.0277\\
     0.0357&0.0015&0.0722\\
     0.0004&0.0063&0.0054
  \end{bmatrix}\geq 0,~
  U^{\#}V =
  \begin{bmatrix}
     0.1313&0.1067&0.3388\\
     0.1283&0.2230&0.4170\\
     0.1328&0.0486&0.2996
  \end{bmatrix}\geq 0,
  \end{equation*}

 \begin{equation*}
 X^{\#} =
  \begin{bmatrix}
     0.0129&0.0043&0.0288\\
     0.0365&0.0004&0.0730\\
     0.0011&0.0063&0.0068
  \end{bmatrix}\geq 0,~
  X^{\#}Y =
  \begin{bmatrix}
     0.1593&0.0754&0.3718\\
     0.1899&0.1670&0.4992\\
     0.1440&0.0296&0.3081
  \end{bmatrix}\geq 0.
  \end{equation*}
  The spectral radius of the individual splittings and the alternating iteration matrix are as follows: $\rho(K^{\#}L) = 0.5841$, $\rho(U^{\#}V) = 0.5515$, $\rho(X^{\#}Y) = 0.5541$,
 $\rho(H) = 0.1728<1.$  The convergence analysis is provided in the Table \ref{tab:table1}.
 \end{example}

\begin{example}\label{ex5.3}
Let us consider a system $Ax = b$ with $A =
  \begin{bmatrix}
    3&-1&-9\\
   -5&-5&-12\\
   -18&-14&-27
  \end{bmatrix}$
   and $b=(-18,-4,6)^{t}.$ Here $A^{\#} =
  \begin{bmatrix}
     0.0972&0.0294&-0.0413\\
    0.0099&0.0007&-0.0137\\
    -0.0674&-0.0274&0.0001
  \end{bmatrix}\ngeq 0.$ We find a nonsingular matrix $$Q = \begin{bmatrix}
     4.9000&-1.8600&-0.5300\\
    -2.0371&7.5984&-2.8996\\
    -1.8670&-2.7776&0.9755
  \end{bmatrix}$$ such that with  $QA = AQ$ and $(QA)^{\#} \geq 0. $
  Now consider the new system $QAx = Qb$, where $QA$ has three different proper G-weak regular splittings i.e., $QA = K_q-L_q = U_q-V_q = X_q-Y_q$. Now
  $K_q=\begin{bmatrix}
    36.0660&12.5447&-8.7030\\
    9.8863&6.1737&8.6910\\
   -6.4071&5.9764&34.7760
  \end{bmatrix},$
  \begin{equation*}
  U_q =
  \begin{bmatrix}
    35.6316&12.4668&-8.3015\\
    9.4460&5.8062&7.9290\\
   -7.2936&4.9516&32.0885
   \end{bmatrix},~
   X_q =
  \begin{bmatrix}
    34.9083&12.2843&-7.8472\\
    8.7488&5.3427&7.2025\\
   -8.6617&3.7439&29.4545
  \end{bmatrix}.
  \end{equation*}
  Clearly,
  \begin{equation*}
  K_q^{\#} =
  \begin{bmatrix}
     0.0222&0.0091&0.0001\\
     0.0077&0.0052&0.0084\\
     0.0008&0.0066&0.0254
    \end{bmatrix}\geq 0,~
  K_q^{\#}L_q=
  \begin{bmatrix}
    0.0725&0.0303&0.0030\\
    0.0530&0.0465&0.1007\\
    0.0866&0.1091&0.2990
   \end{bmatrix}  \geq 0,
   \end{equation*}

\begin{equation*}
 U_q^{\#} =
  \begin{bmatrix}
     0.0225&0.0092&0.0001\\
     0.0081&0.0056&0.0092\\
     0.0018&0.0075&0.0277
    \end{bmatrix}\geq 0,~
  U_q^{\#}V_q =
  \begin{bmatrix}
    0.0597&0.0255&0.0048\\
    0.0428&0.0381&0.0838\\
    0.0686&0.0889&0.2466
  \end{bmatrix}\geq 0,
  \end{equation*}

 \begin{equation*}
 X_q^{\#} =
  \begin{bmatrix}
    0.0231&0.0094&0.0001\\
    0.0088&0.0061&0.0102\\
    0.0034&0.0089&0.0304
    \end{bmatrix}\geq 0,~
  X_q^{\#}Y_q =
  \begin{bmatrix}
    0.0379&0.0175&0.0083\\
    0.0223&0.0251&0.0649\\
    0.0290&0.0578&0.1863
  \end{bmatrix}\geq 0.
  \end{equation*}
  The spectral radius of the individual splittings and the alternating iteration matrix are as follows: $\rho(K_q^{\#}L_q) = 0.3417$, $\rho(U_q^{\#}V_q) = 0.2823$, $\rho(X_q^{\#}Y_q) = 0.2097$,
 $\rho(H) = 0.0203<1.$  The convergence analysis  is shown in the Table .
 \end{example}

\begin{example}\label{ex5.4}
Let us consider a system $Ax=b$ with $A =
  \begin{bmatrix}
   47& -9& -5\\
    5& 0& 4\\
    -14& 3& 3
\end{bmatrix}$ and $b=(6,3,-1)^{t}.$
Clearly $A$ is group monotone matrix since $A^{\#} =
\begin{bmatrix}
    0.0843& 0.0311& 0.2145\\
    0.2801& 0.1526& 0.9462\\
    0.0653& 0.0405& 0.2439
\end{bmatrix}
\geq 0 $. Let
\begin{equation*}
A = K - L =
 \begin{bmatrix}
   52.2707& -9.3666& -2.5190\\
   7.1598& 1.8711& 14.5844\\
  -15.0370& 3.7459& 5.7011
\end{bmatrix}
-
\begin{bmatrix}
   5.2707& -0.3666& 2.4810\\
   2.1598& 1.8711& 10.5844\\
   -1.0370& 0.7459& 2.7011
\end{bmatrix}
\end{equation*}
is a proper G-weak regular splitting of $A$ since
\begin{equation*}
K^{\#} =
    \begin{bmatrix}
    0.0380& 0.0065& 0.0610\\
    0.0884& 0.0449& 0.2834\\
    0.0168& 0.0128& 0.0741
    \end{bmatrix}
    \geq 0~\mbox{and}~ K^{\#}L =
    \begin{bmatrix}
    0.1510& 0.0436& 0.3274\\
    0.2693& 0.2630& 1.4604\\
    0.0394& 0.0731& 0.3777
    \end{bmatrix}
    \geq 0.
    \end{equation*}
 Then there exists a nonsingular matrix
\begin{equation*}
Q =
    \begin{bmatrix}
     12.1426& 2.4576& 7.0308\\
     7.1770& 22.2770& 26.4823\\
     4.3098& 0.6414& 24.3790
    \end{bmatrix} ,~ QA = \begin{bmatrix}
     484.5610& -88.1914& -29.7904\\
     77.9525& 14.8537& 132.6700\\
     -135.536& 34.3484& 54.1534
    \end{bmatrix}
    \end{equation*}
    such that $QA = AQ$ and $(QA)^{\#} =
    \begin{bmatrix}
    0.0041& 0.0007& 0.0068\\
    0.0092& 0.0049& 0.0308\\
    0.0017& 0.0014& 0.0080
   \end{bmatrix}
    \geq 0.
$ Now let us consider a new system $QAx = Qb,$ where $QA$ has a
proper G-regular splitting i.e., $QA = K_q-L_q$. Now
\begin{equation*}
K_q =
\begin{bmatrix}
    493.1640& -76.7488& 31.2533\\
    89.3695& 28.7235& 207.4530\\
   -134.5980& 35.1574& 58.7334
\end{bmatrix}~\mbox{and}~L_q =
\begin{bmatrix}
    8.6028& 11.4425& 61.0437\\
    11.4170& 13.8697& 74.7837\\
    0.9386& 0.8091& 4.5800
\end{bmatrix}.
\end{equation*}

\begin{equation*}
K_q^{\#} =
\begin{bmatrix}
     0.0032& 0.0002& 0.0035\\
     0.0063& 0.0032& 0.0202\\
     0.0010& 0.0010& 0.0056
    \end{bmatrix} \geq 0 ~ \mbox{and}~ L_q \geq 0.
\end{equation*}
Since $QK_q^{\#}-K^{\#} =
\begin{bmatrix}
 0.0235& 0.0109& 0.0703\\
 0.1029& 0.0543& 0.3393\\
 0.0265& 0.0145& 0.0897
\end{bmatrix}
\geq 0.$ Therefore, $\rho(K_q^{\#}L_q) = 0.3318 \leq 0.6993 =
\rho(K^{\#}L)<1.$ The numerical results for comparison is discussed
in Table.
\end{example}
The concept of the three-step alternating iteration scheme and
comparison results can be applied to nonsingular system.  The
validation of the proposed approach is explained in the next
example.

\begin{example}\label{ex5.5}
Let us consider the nonsingular $M$-matrix
\begin{equation*}
    A =  \begin{bmatrix}
    10.8654&-0.3333&-1.4444&-1.2222&-0.6667&-0.1111&-1.3333&-2&-0.5556\\
    -1.6667&9.0877&-2&-1.3333&-0.8889&-2&-0.2222&-0.5556&-0.3333\\
    -1.6667&-1.5556&9.8654&-1.1111&-1.2222&-1.3333&-1.5556&-1.8889&-2.2222\\
    -0.7778&-0.8889&-2.2222&9.1988&-1.8889&-1&-0.1111&-0.5556&-0.5556\\
    -1.4444&-0.4444&-1.2222&-1.2222&10.6432&-0.1111&-0.1111&-1.8889&-2.1111\\
    -1.5556&-0.5556&-0.4444&-0.3333&-1.7778&9.9765&-1.5556&-1.1111&-2\\
    -1.8889&-1.1111&-0.3333&-0.5556&-2.1111&-1.5556&9.6432&-1.8889&-2.1111\\
    -0.8889&-2&-0.1111&-0.1111&-0.5556&-0.3333&-0.2222&10.8654&-0.3333\\
    -0.6667&-0.6667&-1.3333&-1.4444&-1.5556&-1.4444&-1.6667&-2.2222&9.5321
     \end{bmatrix}
    \end{equation*}
    $ ~~ = K-L = U-V = X-Y$ are three weak regular splitting of $A$ such that $A^{-1}\geq 0.$ In this case
    \begin{equation*}
    K =  \begin{bmatrix}
    11.1529&0.0167&-1.5694&-1.0347&-0.3042&0.3014&-1.2333&-1.6875&-0.5931\\
    -1.3417&8.9502&-1.7125&-1.1333&-0.4639&-2.2750&0.1278&-0.7556&-0.7083\\
    -1.7292&-1.2306&10.0779&-1.2611&-1.0097&-1.5833&-1.4556&-1.9889&-2.2722\\
    -0.5778&-1.0139&-2.2972&9.2488&-1.4139&-1&-0.4111&-0.3806&-0.2681\\
   -1.8069&-0.0694&-1.3347&-1.2597&10.7057&0.2889&-0.0986&-1.4389&-1.9986\\
    -1.1431&-0.7056&-0.1819&-0.1458&-1.3653&9.8390&-1.1056&-1.2236&-1.6375\\
    -1.7139&-0.8986&-0.0208&-0.1806&-2.4361&-1.2056&9.8182&-1.7514&-2.2486\\
    -0.4889&-1.95&-0.2361&-0.0236&-0.2431&0.1292&-0.0972&11.1279&0.0417\\
    -0.2792&-0.3542&-1.0458&-1.7069&-1.7181&-1.0819&-1.8167&-1.9347&9.7696
     \end{bmatrix},
    \end{equation*}

    \begin{equation*}
    U =  \begin{bmatrix}
    11.0404&-0.0458&-1.3569&-1.4097&-0.5542&0.0389&-1.5333&-1.7125&-1.0181\\
    -1.1667&9.1002&-1.9250&-0.8958&-0.7389&-2.2750&-0.3097&-0.3681&-0.1083\\
    -1.1792&-1.5806&9.5654&-0.8736&-0.9972&-0.9833&-1.3806&-1.7764&-2.0347\\
    -1.0653&-0.6639&-1.8472&9.1113&-1.6139&-0.8250&0.0014&-0.5806&-0.1681\\
   -1.6944&-0.1319&-1.0097&-1.1597&10.6557&0.1139&0.1014&-1.6139&-1.8986\\
    -1.2181&-0.8931&-0.0319&-0.2458&-1.5903&10.2015&-1.6181&-0.8486&-1.5\\
    -1.5014&-0.9986&-0.3458&-0.2056&-2.1111&-1.5806&10.1432&-1.6264&   -1.6236\\
    -0.6514&-1.5375&-0.1611&0.2139&-0.0556&-0.1458&0.1778&11.2529&0.0917\\
    -0.8292&-0.7042&-0.8833&-1.4694&-1.7056&-1.1319&-1.4167&-2.1722&9.1446
     \end{bmatrix},
    \end{equation*}

     \begin{equation*}
    X =  \begin{bmatrix}
    11.1779&-0.0833&-1.6069&-1.3972&-1.1167&0.3764&-0.8333&-1.5875&-0.8556\\
    -2.1292&9.5502&-2.0250&-1.3833&-0.5389&-1.7375&0.2778&-0.1806&0.0667\\
    -1.2417&-1.4681&9.9029&-1.2236&-1.2347&-0.8958&-1.9431&-2.1639&-2.3222\\
   -0.5278&-1.1139&-2.4097&9.6363&-1.8264&-1.0750&-0.2611&-0.5806&-0.4931\\
   -1.0319&-0.1319&-0.9597&-0.7347&11.0307&0.0264&0.2514&-1.7139&-2.4361\\
   -1.3556&-0.5181&-0.2694&-0.6708&-1.6278&9.6890&-1.3931&-0.6611&-1.9250\\
   -2.1389&-1.0111&-0.1833&-0.0931&-1.8361&-1.5181&9.5807&-1.8014&-1.8236\\
    -0.7014&-2.2125&-0.1236&0.2764&-0.0681&-0.2458&-0.4472&10.7029&0.0292\\
    -0.9417&-0.7667&-1.0708&-1.2569&-1.7056&-1.0319&-1.5792&-1.9347&10.0071
     \end{bmatrix}.
    \end{equation*}
    Here $\rho(X^{-1}YU^{-1}VK^{-1}L) = 0.1513 \leq 0.3038 = \rho(U^{-1}VK^{-1}L) \leq 0.5346 = \rho(K^{-1}L).$
The comparison analysis of one step, two-step and three-step
alternating iteration scheme is provided in the table. It also
contains the same analysis for  two random nonsingular matrices of
order $1000$ and $2000$.
\end{example}

\section{Conclusion}
We have introduced the three-step alternating iterations for
singular linear systems of index 1 and studied its convergence
criteria. Three algorithms are also provided for numerical
computation and complexity. Finally, a comparison result is proved
which guarantees the fact that the three-step alternating iterations
converges faster than the usual one, and is also shown through
examples. The authors of \cite{mc}, \cite{mish} and \cite{misalt}
studied the two-step alternating iterations for rectangular matrices
using the Moore-Penrose inverses, very recently. However,  their
works lack computational implementation which is addressed in this
paper. Finally, we conclude the paper with the comparative analysis
of one step, two step and three step iterations.

The iterative methods (i.e., matrix splitting methods) and
semi-iterative methods are among many methods that have been
suggested in the literature to solve real singular linear systems. A
matrix is called an {\it EP matrix} if $R(A)=R(A^t)$. If $A$ is an
EP matrix, then the proposed scheme will converge to the least
squares solution of minimum norm. Migall{\'o}n {\it et al.}
\cite{miga} studied alternating two-stage methods for consistent
linear systems to obtain the parallel solution of Markov chains,
recently. The same authors further extended the same notion in
\cite{miga1}. Further applications of this theory to compute the
PageRank of a google matrix can also be found in the recent article
\cite{gu}. Hence, we conclude this article with the hope that our
work may help to deal with singular linear systems which appear in
different areas of mathematics as mentioned above and in the
introduction part. We hope that this work will provide useful
insights into extending this approach and thus help in solving
rectangular linear systems in a faster way.

\end{document}